\documentclass[12pt]{article}
\usepackage{amsmath}
\usepackage{amsfonts}
\usepackage{amssymb}

\usepackage{amsthm} 
\usepackage{amscd}
\theoremstyle{plain} 
\newtheorem{theorem}{Theorem}[section]
\newtheorem{corollary}[theorem]{Corollary}

\newtheorem{lemma}[theorem]{Lemma}

\newtheorem{definition}[theorem]{Definition}

\numberwithin{equation}{section}

\newcommand{\al}{\alpha}
\newcommand{\be}{\beta}

\newcommand{\bz}{\mathbf {Z}}

\newcommand{\ot}{\otimes}

\newcommand{\lra}{\longrightarrow}
\newcommand{\lla}{\longleftarrow}

\begin{document}
\title{A Comparison of Products in \\  Hochschild Cohomology}
\author{Jerry M. Lodder}
\date{}
\maketitle

\noindent
{\bf{Abstract.}} 
In this paper we transport Steenrod's cup-$i$ products, $i \geq 0$,
from the singular cochains on the free loop space ${\rm{Maps}}(S^1, \,
BG)$ to Hochschild's original cochain complex ${\rm{Hom}}_k (k[G]^{\ot
  *}, \, k[G])$ defining Hochschild cohomology.  Here $G$ is a
discrete group, $k$ an arbitrary (commutative) coefficient ring, and $BG$ the
classifying space of $G$.  This induces a natural action of the (mod 2)
Steenrod algebra on the Hochschild cohomology of a group ring.
For cochains supported on $BG$, we prove
that Gerstenhaber's cup product agrees with the simplicial cup product
and Gerstenhaber's pre-Lie product agrees with Steenrod's cup-one
product.  As a consequence, for cocycles $f$ and $g$ supported on
$BG$, the Gerstenhaber bracket $[f, \, g] = 0$ in $HH^*(k[G]; \,
k[G])$.  This is interpreted in terms of the Batalin-Vilkovisky
structure on $HH^*(k[G]; \, k[G])$.

\medskip
\noindent
Key Words: Hochschild cohomology, Gerstenhaber's product, cup-$i$
products, Batalin-Vilkovisky algebras, the free loop space.  

\medskip
\noindent
MSC Classification:  16E40, 55P50, 18G30.

\section{Introduction}
Recall that for a group $G$, the cyclic bar construction, 
$N^{\rm{cy}}_*(G)$ \cite[7.3.10]{cyclic-hom}, is a simplicial set whose
geometric realization is a model for the free loop space
$$  {\rm{Maps}}(S^1, \ BG) := {\cal{L}}BG,  $$
where $BG$ is the classifying space of $G$ and $S^1$ denotes the unit
circle.  The free loop space is of interest in string topology and is
a topic of current research.  
The singular cohomology groups $H^* ( {\cal{L}}BG; \, k)$ can
be computed using the model $N^{\rm{cy}}_*(G)$, namely from the
$b^*$ cochain complex, $n \geq  0$, 
$$  \ldots \, \overset{b^*}{\lra}
{\rm{Hom}}_k(k[G]^{\ot (n+1)}, \, k) \overset{b^*}{\lra} 
{\rm{Hom}}_k(k[G]^{\ot (n+2)}, \, k) \overset{b^*}{\lra} \, \ldots \ .$$
Additionally, the simplicial structure of $N^{\rm{cy}}_*(G)$ allows
the construction of Steenrod's cup-$i$ products \cite{Steenrod}, $i
\geq 0$, on the cochain complex 
$$  {\rm{Hom}}_k ( k[G]^{\ot (*+1)}, \, k),  $$ 
using the $b^*$ coboundary without restriction on the
coefficient ring $k$ (often considered as $\bz$ in this paper).  
The homotopy equivalence 
$$  \lambda : |N^{\rm{cy}}_*(G)| \overset{\simeq}{\longrightarrow} 
{\cal{L}}BG  $$
as formulated by Goodwillie \cite{Goodwillie} and others \cite{BF}
\cite[7.3.11]{cyclic-hom} 
induces a quasi-isomorphism of cochain complexes
$$  \lambda^* : C^*({\cal{L}}BG, \, k) \to
{\rm{Hom}}_k( k[G]^{\ot (*+1)}, \, k)  $$
that preserves the cup-$i$ products, where $C^*$ denotes singular
cochains.  The simplicial cup (zero) product is homotopy commutative at the
cochain level with the  homotopy given by two possible compositions
for cup-one products.  All higher homotopies, cup-$(i-1)$ products,  
are themselves homotopy commutative, with the cup-$i$ product providing 
the homotopy between two possible compositions for the cup-$(i-1)$ products.  

The goal of this paper is to transport the cup-$i$ products to
Hochschild's original cochain complex \cite{Hochschild1944} defining
Hochschild cohomology, i.e.,
$$  \ldots  \overset{\delta}{\lra} 
{\rm{Hom}}_k(k[G]^{\ot n}, \, k[G]) \overset{\delta}{\lra} 
{\rm{Hom}}_k(k[G]^{\ot (n+1)}, \, k[G]) \overset{\delta}{\lra}  \ldots \ ,$$
where the coboundary map $\delta$ involves the product in the ring
$k[G]$.  In this way, the cohomology groups $HH^*(k[G]; \, k[G])$
support two product structures, namely the Gerstenhaber product
\cite{Gerstenhaber} and the simplicial cup (zero) product.  
While the Gerstenhaber product is known to be homotopy commutative on 
Hochschild cochains, the two possible compositions to establish this 
equivalence are themselves not homotopy equivalent, as detected by
the Gerstenhaber (Lie) bracket, which, in general, is nonzero on
Hochschild cohomology.  

In this paper we prove that the two products, 
the simplicial and the Gerstenhaber product, agree as cochains 
when evaluated on a subcomplex representing
$BG$, i.e, constant maps of $S^1$ into $BG$, or strings of length
zero.  Also, Steenrod's cup-one
product agrees with Gerstenhaber's pre-Lie product on this
subcomplex.  As a consequence, we prove that for cocycles $f$ and $g$
supported on $BG$, the Gerstenhaber bracket $[f, \, g]$ is zero in
$HH^*(k[G]; \, k[G])$, since $[f, \, g]$ becomes the coboundary of a
cup-two product involving $f$ and $g$.  Thus, in the Batalin-Vilkovisky algebra
\cite{Tradler} on $HH^*(k[G]; \, k[G])$, we have
$ \Delta ( f \cdot g) = \Delta (f) \cdot g +
(-1)^p f \cdot \Delta (g)$, where $f$ and $g$ are cocycles supported on
$BG$, ${\rm{deg}}(f) = p$.  Also, the cup-$i$ products induce a
natural action of the mod 2 Steenrod algebra on $HH^*( \bz /2 [G]; \,
\bz /2 [G])$ that recovers the action of the Steenrod algebra on the
singular cohomology groups $H^* ({\cal{L}}BG; \, \bz /2)$.  
Finally, two operads are seen to
act on ${\rm{Hom}}_k (k[G]^{\ot *}, \, k[G])$, one is the endomorphism
operad \cite[5.2.12]{LV}, giving rise to the pre-Lie product $f \circ
g$.  The other is the sequence operad \cite{Berger, McClure}, giving
rise to the cup-$i$ products $f \underset{i, \, S}{\cdot} g$, and also
used in the proof of the Deligne conjecture \cite{McClure}.  For
cochains $f$ and $g$ supported on $BG$, we have 
$$  f \circ g = f \underset{1, \, S}{\cdot} g.  $$    

Section Two of the paper contains the definitions of the cochain
complexes ${\rm{Hom}}_k (k[G]^{\ot *}, \, k[G])$, $\delta$, and 
${\rm{Hom}}_k( k[G]^{\ot (* + 1)}, \, k)$, $b^*$, as well as the
(injective) cochain map 
$$  \Phi_* : {\rm{Hom}}_k (k[G]^{\ot *}, \, k[G])  
\longrightarrow {\rm{Hom}}_k( k[G]^{\ot (* + 1)}, \, k).  $$
Definition \eqref{supported} and the discussion there offer the
construction of a cochain supported on $BG$.   
This section also contains the definitions of all products 
necessary for the proofs in the following section.    
Section Three transports the cup-$i$ structure from
${\rm{Hom}}_k( k[G]^{\ot (* + 1)}, \, k)$ to 
${\rm{Hom}}_k (k[G]^{\ot *}, \, k[G])$ via a cochain map
$$  \Psi_* : I_* \to {\rm{Hom}}_k (k[G]^{\ot *}, \, k[G]),  $$
where $I_* = {\rm{Im}}\, (\Phi_*)$.  We have $\Psi_* \circ \Phi_* =
{\bf{1}}$ on ${\rm{Hom}}_k (k[G]^{\ot *}, \, k[G])$.  The various
products are compared in this last section.

\section{Hochschild Cohomology}

Let $A$ be an associative algebra over a ground ring $k$ that is
unital, commutative and associative, such as the integers.  
Recall \cite{Hochschild1944, Hochschild1945} that $HH^*(A; \, A)$, 
the Hochschild cohomology of
$A$ with coefficients in A viewed as a bimodule over itself is the
homology of the cochain complex:
\begin{align*} 
& {\rm{Hom}}_k(k, \, A) \overset{\delta}{\lra} {\rm{Hom}}_k(A, \, A)
\overset{\delta}{\lra} \ldots \\
& \ldots  \overset{\delta}{\lra} 
{\rm{Hom}}_k(A^{\ot n}, \, A) \overset{\delta}{\lra} 
{\rm{Hom}}_k(A^{\ot (n+1)}, \, A) \overset{\delta}{\lra}  \ldots \, ,
\end{align*}
where, for a $k$-linear map $f: A^{\ot n} \to A$, $\delta f : A^{\ot
  (n+1)} \to A$ is given by
\begin{align*}
& (\delta f)(a_1, \, a_2, \, \ldots \, , a_{n+1}) = a_1 f(a_2, \, \ldots
\, , a_{n+1})  \, + \\
& \Big( \sum_{i=1}^n (-1)^i f(a_1, \, a_2, \, \ldots \, , a_i a_{i+1}, \,
\ldots \, , a_n) \Big) + (-1)^{n+1} f(a_1, \, a_2, \, \ldots \, , a_n)
a_{n+1}.  
\end{align*}
For the special case of $n = 0$, $(\delta f)(a_1) = a_1 f(1) - f(1)
a_1$.  

These cohomology groups, $HH^*(A; \, A)$, support a construction as an
{\sc{Ext}}-functor over the ring $A \ot A^{\rm{op}}$ \cite[IX.4]{Cartan}.
Also, applying the {\sc{Tor}}-functor to a certain free $A \ot A^{\rm{op}}$
resolution of the product
$$  m:  A \ot A \to A, \ \ \ m(x \ot y) = xy,  $$
we have the following standard resolution for computing $HH_*(A;
\, A)$, the Hochschild homology of $A$ with coefficients in the
bimodule $A$ \cite[X.4]{Maclane}:  
$$  A \overset{b}{\lla} A^{\ot 2} \overset{b}{\lla} \, \ldots \, 
\overset{b}{\lla} A^{\ot n} \overset{b}{\lla} A^{\ot (n+1)}
\overset{b}{\lla} \, \ldots \, ,  $$
where for $(a_0, \, a_1, \, \ldots \, , a_n) \in A^{\ot (n+1)}$,
\begin{align*}
&  b(a_0, \, a_1, \, \ldots \, , a_n) = \\
& \Big( \sum_{i=0}^{n-1} (-1)^i (a_0, \, \ldots \, , a_i a_{i+1}, \,
\ldots \, , a_n) \Big) + (-1)^n (a_n a_0,, a_1, \, \ldots \, , a_n).
\end{align*}
For $n =1 $, $b(a_0, \, a_1) = a_0 a_1 - a_1 a_0$.  Moreover, when $A$
is unital, $\{ A^{\ot (n+1)} \}_{n \geq 0}$ is a simplicial $k$-module
with face maps
\begin{align} \label{face}
& d_i = b_i : A^{\ot (n+1)} \to A^{\ot n}, \ \ \ i = 0, \ 1, \ 2, \,
\ldots \, , \, n, \\
& b_i (a_0, \, a_1, \, \ldots \, , \, a_n) = (a_0, \, \ldots \, , a_i
a_{i+1} \, , \ldots \, , a_n), \ \ \ 0 \leq i \leq n-1,  \\
& b_n (a_0, \, a_1, \, \ldots \, , \, a_n) = (a_n a_0, \, a_1, \,
\ldots \, , a_n),
\end{align}
and degeneracies $s_i : A^{\ot (n+1)} \to A^{\ot (n+2)}$, $i = 0$, 1,
2, $\ldots \,$, $n$,
\begin{align} \label{degeneracies}
& s_i(a_0, \, a_1, \, \ldots \, , \, a_n) = 
(a_0, \, a_1, \, \ldots \, , \, a_i, \, 1, \, a_{i+1}, \, \ldots \, , \,
a_n), \ \ \ 0 \leq i \leq n.
\end{align}

Let $HH^*_{\cal{K}}(A)$ denote the homology of the cochain complex
${\rm{Hom}}_k(A^{\ot (*+1)}, \, k)$ with respect to the $b^*$
coboundary map, i.e., 
\begin{align*} 
&{\rm{Hom}}_k(A, \, k) \overset{b^*}{\lra} {\rm{Hom}}_k(A^{\ot 2}, \, k)
\overset{b^*}{\lra} \, \ldots \\ 
&\ldots \, \overset{b^*}{\lra}
{\rm{Hom}}_k(A^{\ot n}, \, k) \overset{b^*}{\lra} 
{\rm{Hom}}_k(A^{\ot (n+1)}, \, k) \overset{b^*}{\lra} \, \ldots \, ,
\end{align*}
where for a $k$-linear map $\varphi : A^{\ot n} \to k$, $b^*(\varphi)
: A^{\ot (n+1)} \to k$ is given by
$$  b^*(\varphi) (a_0, \, \ldots \, , \, a_n) = \varphi(b(a_0, \,
\ldots \, , \, a_n)). $$
 
Both $HH^*(A; \,A)$ and $HH^*_{\cal{K}}(A)$ inherit graded product
structures from associative products on the respective cochains.  For
$f \in {\rm{Hom}}_k(A^{\ot p}, \, A)$ and 
$g \in {\rm{Hom}}_k(A^{\ot q}, \, A)$, the Gerstenhaber (cup) product
\cite{Gerstenhaber} 
$$  f \underset{G}{\cdot} g \in {\rm{Hom}}_k(A^{\ot (p+q)}, \, A) $$
is given by 
$$  (f \underset{G}{\cdot} g) (a_1, \, a_2, \, \ldots \, , \, a_{p+q})
=  f(a_1, \, \ldots \, , \, a_p) \cdot g(a_{p+1}, \, \ldots \, , \,
a_{p+q}),  $$
where the product above occurs in the algebra $A$.  Then 
$$  \delta  (f \underset{G}{\cdot} g) = (\delta f)
\underset{G}{\cdot} g + (-1)^p f \underset{G}{\cdot} (\delta g).  $$
Thus, if $f \in HH^p(A; \, A)$ and $g \in HH^q(A; \, A)$, then
$f \underset{G}{\cdot} g \in HH^{p+q}(A; \, A)$.  For $\al \in
{\rm{Hom}}_k(A^{\ot (p+1)}, \, k)$ and $\be \in {\rm{Hom}}_k(A^{\ot
  (q+1)}, \, k)$, the simplicial (cup) product 
$\al \underset{S}{\cdot} \be \in {\rm{Hom}}_k(A^{\ot (p+q+1)}, \, k)$
is given by 
$$ (\al \underset{S}{\cdot} \be)(\sigma) = \al (d_{p+1} \, d_{p+2} \, \ldots
\, d_{p+q} (\sigma)) \cdot \be (d_0 \, d_1 \,  \ldots \,  
d_{p-1}(\sigma)),  $$
where $d_{p+1} \, d_{p+2} \, \ldots \, d_{p+q} (\sigma)$ is
the front $p$-face of $\sigma = (a_0, \, a_1, \, \ldots \, , \,
a_{p+q}) \in A^{\ot (p+q+1)}$ and $d_0 \, d_1 \, \ldots \,
d_{p-1}(\sigma) = d^p_0 (\sigma)$ is the back $q$-face of $\sigma$.
The product above is now in the ground ring $k$.  We have 
$$  b^* (\al \underset{S}{\cdot} \be) = b^*(\al) \underset{S}{\cdot}
\be + (-1)^p \al \underset{S}{\cdot} b^*(\be).  $$ 
For $\al \in HH^p_{\cal{K}}(A)$ and $\be \in HH^q_{\cal{K}}(A)$, it
follows that $\al \underset{S}{\cdot} \be \in HH^{p+q}_{\cal{K}}(A)$.  

Gerstenhaber \cite{Gerstenhaber} has shown that on $HH^*(A; \, A)$,
the product $f \underset{G}{\cdot} g$ is graded commutative by using
the idea of function composition, understood today in terms of the
endomorphism operad ${\rm{Hom}}_k(A^{\ot n}, \, A)$ \cite[5.2.12]{LV}.  
Specifically, for $f \in {\rm{Hom}}_k(A^{\ot p}, \, A)$ and 
$g \in {\rm{Hom}}_k(A^{\ot q}, \, A)$, define $f \underset{(j)}{\circ}
g \in {\rm{Hom}}_k(A^{\ot (p+q-1)}, \, A)$ for $j = 0$,  1,  2, 
$\, \ldots \,$,  $p-1$,  by 
\begin{align*}
& (f \underset{(j)}{\circ} g) (a_1, \, a_2, \, \ldots \, , \,
a_{p+q-1}) = \\
& f(a_1, \, \ldots \, , \, a_j, \, g(a_{j+1}, \, \ldots \,
, \, a_{j+q}), \, a_{j+q+1}, \, \ldots \, , \, a_{p+q-1}).  
\end{align*}
Choosing the sign convention $f \circ g = \sum_{j=0}^{p-1}
(-1)^{(p-1-j)(q-1)} f \underset{(j)}{\circ} g$, we have
\begin{align*}
& \delta(f \circ g) = (\delta f) \circ g + (-1)^{p-1} (f \circ \delta
g) + (-1)^p [ f \underset{G}{\cdot} g - (-1)^{pq} g
\underset{G}{\cdot} f].  
\end{align*} 
If $f$ and $g$ are cocycles, then $f \underset{G}{\cdot} g$ and 
$(-1)^{pq} g \underset{G}{\cdot} f$ differ by a coboundary, so that in
$HH^*(A; \, A)$,
$$  f \underset{G}{\cdot} g = (-1)^{pq} g \underset{G}{\cdot} f . $$
Gerstenhaber calls $f \circ g$ a pre-Lie product, since 
$$  [f, \, g] = f \circ g - (-1)^{(p+1)(q+1)} g \circ f  $$ 
induces a Lie bracket on $HH^*(A; \, A)$.    

From the work of Steenrod \cite{Steenrod}, it follows that the simplicial cup product
is graded commutative on the cohomology of any simplicial complex,
although in 1947 Steenrod was writing  before the formulation of the
modern definition of a (semi)simplicial set.  
For $\al \in {\rm{Hom}}_k(A^{\ot (p+1)}, \, k)$
and  $\be \in {\rm{Hom}}_k(A^{\ot (q+1)}, \, k)$, recall that the
cup-one product
$$  \al \underset{1, \, S}{\cdot} \be \in {\rm{Hom}}_k(A^{\ot (p+q)},
\, k) $$
can be written in terms of the face maps $d_i$ as
\begin{align*}
& (\al \underset{1, \, S}{\cdot} \be)(\sigma)  =  \\
& \sum_{j=0}^{p-1} (-1)^{(p-1-j)(q-1)} \, \al ( (
d_{j+1} \, d_{j+2} \, \, \ldots \, \, d_{j+q-1}) (\sigma) )
\cdot \\
& \hspace{1in} \be( (d_0 \, d_1 \, \, \ldots \, \, d_{j-1} \,
d_{j+q+1} \, d_{j+q+2} \, \, \ldots \, \, d_{p+q-1}) (\sigma) ),
\end{align*}
where $\sigma = (a_0, \, a_1, \, \ldots \, , \, a_{p+q-1}) \in A^{\ot
  p+q}$.  
With the above choice of signs, we have:
\begin{align*}
& b^*( \al \underset{1, \, S}{\cdot} \be) = b^*( \al ) \underset{1, \, S}{\cdot} \be 
+ (-1)^{p-1} \al  \underset{1, \, S}{\cdot} b^*( \be ) 
+(-1)^p [ \al  \underset{S}{\cdot} \be - (-1)^{pq} \be  \underset{S}{\cdot} \al ].
\end{align*}
Again, for cocycles $\al$ and $\be$, $\al  \underset{S}{\cdot} \be$
and $(-1)^{pq} \be  \underset{S}{\cdot} \al$ differ by a coboundary.
A description of the cup-$i$ products, $i \geq 0$, 
$$  \al \underset{i, \, S}{\cdot} \be \in {\rm{Hom}}_k(A^{\ot (p+q+1-i)},
\, k) $$ 
in terms of the face maps $d_j$ can be deduced from \cite{McClure,
  Steenrod}.  We use the following sign convention:
\begin{align*}
& b^*( \al \underset{i, \, S}{\cdot} \be) = b^*( \al ) \underset{i, \, S}{\cdot} \be 
+ (-1)^{p-1} \al  \underset{i, \, S}{\cdot} b^*( \be ) \\
& \hspace{.9in} + (-1)^p [ (-1)^{(i-1)(p+q+1)}\al  \underset{i-1, \, S}{\cdot} \be - 
(-1)^{pq} \be  \underset{i-1, \, S}{\cdot} \al ].
\end{align*}
Today the cup-$i$ products are understood in terms of the sequence 
operad \cite{Berger, McClure}.

Now, the group ring $k[G]$ is an algebra over the cyclic operad
\cite[13.14.6]{LV}, meaning that $k[G]$ supports a symmetric, bilinear
inner product
$$  \langle \ , \ \rangle : k[G] \times k[G] \to k  $$ 
satisfying $\langle ab, \, c \rangle = \langle a, \, bc \rangle$, for
all $a$, $b$, $c \in k[G]$.  By definition, for $g$, $h \in G$, 
$$  \langle g, \, h \rangle = \begin{cases} 1, & h = g^{-1} \\
                                            0, & h \neq g^{-1}.
                              \end{cases}  $$
Then $\langle \ , \ \rangle$ is extended to be linear in each
variable, resulting in a $k$-linear map on the tensor product:
$\langle \ , \ \rangle : k[G] \ot k[G] \to k$.   Since 
$\langle \ , \ \rangle$ is symmetric, we also have $\langle a, \, bc
\rangle = \langle ca, \, b \rangle$, i.e.,  $\langle \ , \ \rangle$ is
invariant under a cyclic shift of the product.  Note that with this
structure, $k[G]$ is also a Frobenius algebra.  Although stated for
group rings, the following lemma applies to any (symmetric) Frobenius
algebra.    

\begin{lemma} \label{Phi-lemma}  There is cochain map
$$  \Phi_n : {\rm{Hom}}_k (k[G]^{\ot n} , \, \, k[G]) \to {\rm{Hom}}_k 
 (k[G]^{\ot (n+1)} , \, \, k), \ \ \   n \geq 0, $$
given by 
$$  \Phi_n (f) (g_0, \, g_1, \, g_2, \, \ldots \, , \, g_n) =
\langle g_0, \ f(g_1, \, g_2, \, \ldots \, , \, g_n) \rangle ,  $$
where $f : k[G]^{\ot n} \to k[G]$ is a $k$-linear map and each $g_i
\in G$.  
\end{lemma}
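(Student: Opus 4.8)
The plan is to verify directly that $\Phi_*$ commutes with the two coboundaries, that is, that $b^* \circ \Phi_n = \Phi_{n+1} \circ \delta$ as maps ${\rm{Hom}}_k(k[G]^{\ot n}, \, k[G]) \to {\rm{Hom}}_k(k[G]^{\ot (n+2)}, \, k)$. Both composites output $k$-linear functionals on $(n+2)$-fold tensors, so I would fix a $k$-linear map $f : k[G]^{\ot n} \to k[G]$, evaluate each side on a generator $(g_0, \, g_1, \, \ldots, \, g_{n+1})$ with all $g_i \in G$, and compare term by term; $k$-linearity then extends the identity from such generators to all of $k[G]^{\ot (n+2)}$.

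First I would expand the left-hand side. Since $\langle g_0, \, - \rangle$ is linear in its second argument, distributing it across the three groups of terms in the Hochschild coboundary $\delta f$ gives
\[ \Phi_{n+1}(\delta f)(g_0, \, \ldots, \, g_{n+1}) = \langle g_0, \, g_1 \, f(g_2, \, \ldots, \, g_{n+1}) \rangle + \sum_{i=1}^{n} (-1)^i \langle g_0, \, f(g_1, \, \ldots, \, g_i g_{i+1}, \, \ldots, \, g_{n+1}) \rangle + (-1)^{n+1} \langle g_0, \, f(g_1, \, \ldots, \, g_n) \, g_{n+1} \rangle. \]
Next I would expand the right-hand side using the explicit faces of the cyclic bar construction: $b^*(\Phi_n f)(g_0, \, \ldots, \, g_{n+1}) = \Phi_n(f)\big(b(g_0, \, \ldots, \, g_{n+1})\big)$, where $b$ at simplicial level $n+1$ contributes the inner faces $b_i$, $0 \le i \le n$, together with the wrap-around face $b_{n+1}$. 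Applying $\Phi_n(f)$ to each face and reading off $\langle \text{zeroth slot}, \, f(\text{remaining slots}) \rangle$ yields
\[ b^*(\Phi_n f)(g_0, \, \ldots, \, g_{n+1}) = \langle g_0 g_1, \, f(g_2, \, \ldots, \, g_{n+1}) \rangle + \sum_{i=1}^{n} (-1)^i \langle g_0, \, f(g_1, \, \ldots, \, g_i g_{i+1}, \, \ldots, \, g_{n+1}) \rangle + (-1)^{n+1} \langle g_{n+1} g_0, \, f(g_1, \, \ldots, \, g_n) \rangle. \]

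The interior sums coincide on the nose, so the verification reduces to matching the two boundary terms, and this is the only step that uses the Frobenius structure rather than bookkeeping. The leading terms agree because $\langle g_0 g_1, \, f(\ldots) \rangle = \langle g_0, \, g_1 f(\ldots) \rangle$, which is precisely the defining adjunction $\langle ab, \, c \rangle = \langle a, \, bc \rangle$. The trailing terms agree because the wrap-around face $b_{n+1}$ carries $g_{n+1}$ to the front, and the cyclic invariance $\langle a, \, bc \rangle = \langle ca, \, b \rangle$ recorded earlier gives $\langle g_{n+1} g_0, \, f(\ldots) \rangle = \langle g_0, \, f(\ldots) \, g_{n+1} \rangle$.

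Conceptually this is the heart of the matter: the cyclic wrap-around face of $N^{\rm{cy}}_*(G)$ is exactly what encodes right multiplication in $\delta$, so without the symmetric Frobenius pairing $\Phi_*$ would fail to be a cochain map. I expect no real obstacle beyond this identification; I would close by confirming that the signs $(-1)^i$ and $(-1)^{n+1}$ arising from $\delta$ and from $b$ match termwise, and by checking the degenerate case $n = 0$ separately, where $(\delta f)(a_1) = a_1 f(1) - f(1) a_1$ matches $\Phi_0(f)\big(b(g_0, g_1)\big) = \langle g_0 g_1 - g_1 g_0, \, f(1) \rangle$ via the same two identities.
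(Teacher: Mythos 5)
Your proposal is correct and follows essentially the same route as the paper's proof: expand $\Phi_{n+1}(\delta f)$ and $b^*(\Phi_n f)$ on group-element generators, observe the interior terms coincide, and match the two boundary terms using the adjunction $\langle ab, \, c \rangle = \langle a, \, bc \rangle$ and its cyclic variant $\langle a, \, bc \rangle = \langle ca, \, b \rangle$. The only differences are cosmetic (a degree shift in indexing and your explicit $n=0$ check, which the paper subsumes in the general computation).
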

\begin{proof}
For $f \in {\rm{Hom}}_k (k[G]^{\ot (n-1)} , \, \, k[G])$, 
\begin{align*}
&  \Phi_n (\delta f) (g_0, \, g_1, \, \ldots \, , \, g_n)  = \langle
g_0, \ (\delta f)((g_1, \, g_2, \, \ldots \, , \, g_n) \rangle  \\
&  = \langle g_0, \ g_1 f(g_2, \, \ldots \, , \, g_n) \rangle 
+ \sum_{i=1}^{n-1}(-1)^i \langle g_0, \ f(g_1, \, \ldots \, , \, g_i
g_{i+1}, \, \ldots \, , \, g_n) \rangle  \\
& + (-1)^n \langle g_0, \ f(g_1, \, g_2, \, \ldots \, , \, g_{n-1}) g_n
\rangle .
\end{align*}
On the other hand,
\begin{align*}
& b^* ( \Phi_{n-1}(f))(g_0, \, g_1, \, \ldots \, , \, g_n ) =
\Phi_{n-1}(f)( b(g_0, \, g_1, \, \ldots \, , \, g_n )) \\
& = \langle g_0 g_1, \ f(g_1, \, \ldots \, , \, g_n) \rangle 
+ \sum_{i=1}^{n-1} (-1)^i \langle g_0, \ f(g_1, \, \ldots \, , \, g_i
g_{i+1}, \, \ldots \, , \, g_n) \rangle  \\
& + (-1)^{n} \langle g_n g_0, \ f(g_1, \, g_2, \, \ldots \, , \,
g_{n_1}) \rangle.
\end{align*} 
Using the cyclic symmetries of the inner product $\langle \ , \
\rangle$,  we have
$$   \Phi_n (\delta f) =  b^* ( \Phi_{n-1}(f)), \ \ \ n \geq 1 .  $$
\end{proof}

\begin{lemma}  The cochain map 
$$  \Phi_n : {\rm{Hom}}_k (k[G]^{\ot n} , \, \, k[G]) \to {\rm{Hom}}_k 
 (k[G]^{\ot (n+1)} , \, \, k), \ \ \   n \geq 0 , $$
is injective.
\end{lemma}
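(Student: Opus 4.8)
The plan is to prove injectivity directly by showing that $\Phi_n(f) = 0$ forces $f = 0$. The essential ingredient is the nondegeneracy of the inner product $\langle \ , \ \rangle$ on $k[G]$: each basis element $h \in G$ is detected by pairing against the single basis element $h^{-1}$, since $\langle g, \, h \rangle = 1$ precisely when $h = g^{-1}$ and vanishes otherwise. This will let me recover every coefficient of $f$ from the values of $\Phi_n(f)$.

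First I would fix a tuple $(g_1, \, \ldots \, , \, g_n)$ of group elements and expand the value $f(g_1, \, \ldots \, , \, g_n) \in k[G]$ in the group basis, writing $f(g_1, \, \ldots \, , \, g_n) = \sum_{h \in G} c_h \, h$ with $c_h \in k$ and only finitely many $c_h$ nonzero. Applying the definition of $\Phi_n$ together with linearity of the inner product in its second slot gives
\[
\Phi_n(f)(g_0, \, g_1, \, \ldots \, , \, g_n) = \Big\langle g_0, \ \sum_{h \in G} c_h \, h \Big\rangle = \sum_{h \in G} c_h \, \langle g_0, \, h \rangle = c_{g_0^{-1}},
\]
since the only surviving term is the one with $h = g_0^{-1}$.

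Next, assuming $\Phi_n(f) = 0$, the displayed identity yields $c_{g_0^{-1}} = 0$ for every $g_0 \in G$. As $g_0$ ranges over all of $G$, so does $g_0^{-1}$, hence $c_h = 0$ for all $h \in G$, and therefore $f(g_1, \, \ldots \, , \, g_n) = 0$. Since the tuple was arbitrary and the elementary tensors $g_1 \ot \cdots \ot g_n$ span $k[G]^{\ot n}$ over $k$, the $k$-linearity of $f$ forces $f = 0$, which is the claim.

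I do not anticipate a genuine obstacle; the only point requiring a little care is the bookkeeping for infinite $G$, where one should observe that $f(g_1, \, \ldots \, , \, g_n)$ is a \emph{finite} $k$-linear combination of group elements so that the sum over $h$ is finite and the coefficient extraction is legitimate. Equivalently, one may phrase the entire argument structurally: nondegeneracy of the pairing says the adjoint map $k[G] \to {\rm{Hom}}_k(k[G], \, k)$, $x \mapsto \langle \ , \, x \rangle$, is injective, and $\Phi_n(f)$ is obtained from $f$ by postcomposing (in the output variable) with this injection, so $\Phi_n(f) = 0$ immediately gives $f = 0$.
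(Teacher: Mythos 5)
Your proof is correct and follows essentially the same approach as the paper's: both extract each coefficient of $f(g_1, \ldots, g_n)$ by evaluating $\Phi_n(f)$ at a tuple whose zeroth entry is the inverse of the corresponding group element, using that $\langle g, h \rangle$ is nonzero exactly when $h = g^{-1}$. The structural remark about the adjoint map $k[G] \to {\rm{Hom}}_k(k[G], k)$ is a nice rephrasing but not a different argument.
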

\begin{proof}
Suppose that $f \in {\rm{Ker}}(\Phi_n)$.  Then
$$  \Phi_n(f)(g_0, \, g_1, \, \ldots \, , \, g_n) = 0  $$
for all $(g_0, \, g_1, \, \ldots \, , \, g_n) \in G^{n+1}$.  Let 
$f(g_1, \, \ldots \, , \, g_n) = \sum_{i=1}^m c_i h_i \in k[G]$, where
$c_i \in k$ and the $h_i$ are distinct elements of $G$.  For each
$h_j$, we have
\begin{align*}
\Phi_n(f)(h_j^{-1}, \, g_1, \, \ldots \, , \, g_n) &  = \sum_{i=1}^m c_i
\langle h_j^{-1}, \ h_i \rangle \\
& = c_j \langle h_j^{-1}, \ h_j \rangle = 0 .  
\end{align*}
Thus, $c_j = 0$, and $f: k[G]^{\otimes n} \to k[G]$  is the zero map.
\end{proof}

We adopt the following notation for elements of 
${\rm{Hom}}_k (k[G]^{\ot n}, \, k[G])$ and 
${\rm{Hom}}_k (k[G]^{\ot (n+1)}, \, k)$, recalling that $k[G]$ is a
free $k$-module with basis given by the elements of $G$.  For $g_0$,
$g_1$, $\ldots \,$, $g_n \in G$ and $h_1$, $h_2$, $\ldots \, $, $h_n
\in G$, let
$$  (g_0, \, g_1, \, \ldots \, , \, g_n)^{\#} : k[G]^{\ot n} \to k[G]  $$
denote the $k$-linear map determined by 
$$  (g_0, \, g_1, \, \ldots \, , \, g_n)^{\#} (h_1, \, h_2, \, \ldots
\, , \, h_n) = \begin{cases} g_0, & h_1 = g_1, \, \ldots \, , \, h_n =
                                                              g_n , \\
                             0,    & {\rm{otherwise}}.  \end{cases}  $$
Additionally, for $h_0 \in G$, let $(g_0, \, g_1, \, \ldots \, , \,
g_n)^{*}: k[G]^{\ot (n+1)} \to k$ be the $k$-linear map determined
by
$$  (g_0, \, g_1, \, \ldots \, , \, g_n)^{*} (h_0, \, h_1, \, \ldots
\, , \, h_n) = \begin{cases} 1, & 
h_0 = g_0, \,  h_1 = g_1, \, \ldots \, , \, h_n = g_n, \\
                             0,  & {\rm{otherwise}}.  \end{cases}  $$
Under this notation,
$$  \Phi_n \big( (g_0, \, g_1, \, g_2, \, \ldots \, , \, g_n)^{\#} \big) =
(g^{-1}_0, \, g_1, \, g_2, \, \ldots \, , \, g_n)^* .  $$

Let $I_n = {\rm{Im}} \, \Phi_n \subseteq {\rm{Hom}}_k(k[G]^{\ot (n+1)},
\, k)$.  Then $I_* = \{ I_n \}_{n \geq 0}$ is a subcomplex of 
${\rm{Hom}}_k(k[G]^{\ot (*+1)}, \, k)$.  Note that for fixed
$(g_1, \, g_2, \, \ldots \, , \, g_n) \in G^n$, an element $\al \in I_n$
can be written as a finite sum
$$  \al ( \underline{\ \ }\, , \, g_1, \, g_2 , \, \ldots \, , \, g_n) = 
\sum_{i=1}^m c_i(h_i, \, g_1, \, g_2 , \, \ldots \, , \, g_n)^*.  $$

\begin{lemma}
There is a cochain map (in fact, a cochain isomorphism) 
$$  \Psi_n : I_n \to {\rm{Hom}}_k (k[G]^{\ot n}, \, k[G]), \ \ \  n
\geq 0,  $$
induced by 
$$  \Psi_n \big( (g_0, \, g_1, \, g_2, \, \ldots \, , \, g_n)^* \big) =
(g^{-1}_0, \, g_1, \, g_2, \, \ldots , \, , g_n)^{\#} .  $$  
\end{lemma}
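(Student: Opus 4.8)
The plan is to exhibit $\Psi_n$ as the two-sided inverse of the restriction of $\Phi_n$ to its image $I_n$, after which the cochain-map property follows formally from Lemma~\ref{Phi-lemma}. Rather than defining $\Psi_n$ only on the dual generators $(g_0,\,g_1,\,\ldots\,,\,g_n)^*$ and then arguing that the extension is unique, I would first give an intrinsic formula that visibly induces the stated assignment. For $\al \in I_n$ and basis elements $g_1,\,\ldots\,,\,g_n \in G$, set
$$
\Psi_n(\al)(g_1,\,\ldots\,,\,g_n) = \sum_{g_0 \in G} \al(g_0,\,g_1,\,\ldots\,,\,g_n)\, g_0^{-1} \in k[G],
$$
and extend $k$-linearly in $(g_1,\,\ldots\,,\,g_n)$. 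Evaluating this on a single generator $(g_0,\,g_1,\,\ldots\,,\,g_n)^*$ returns $(g_0^{-1},\,g_1,\,\ldots\,,\,g_n)^{\#}$, so the formula does induce the map described in the statement.

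The first thing to check is that $\Psi_n$ is well defined, i.e.\ that the displayed sum is finite. This is exactly where membership in $I_n$ is used: by the description of $I_n$ recorded above, for each fixed $(g_1,\,\ldots\,,\,g_n) \in G^n$ one may write $\al(\,\underline{\ \ }\,,\,g_1,\,\ldots\,,\,g_n) = \sum_{i=1}^m c_i (h_i,\,g_1,\,\ldots\,,\,g_n)^*$, so $g_0 \mapsto \al(g_0,\,g_1,\,\ldots\,,\,g_n)$ is supported on the finite set $\{h_1,\,\ldots\,,\,h_m\}$ and the sum lands in $k[G]$. I would emphasize that for infinite $G$ the generators $(g_0,\,\ldots\,,\,g_n)^*$ do not span all of ${\rm{Hom}}_k(k[G]^{\ot(n+1)},\,k)$, so this finiteness is genuinely a consequence of lying in the image of $\Phi_n$; this is the only delicate point in the argument.

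Next I would verify that $\Psi_n$ inverts $\Phi_n$ using the nondegenerate pairing $\langle g,\,h\rangle = 1$ iff $h = g^{-1}$. Writing $f(g_1,\,\ldots\,,\,g_n) = \sum_{h \in G} c_h h$ (a finite sum), bilinearity gives $\Phi_n(f)(g_0,\,g_1,\,\ldots\,,\,g_n) = \langle g_0,\,f(g_1,\,\ldots\,,\,g_n)\rangle = c_{g_0^{-1}}$, whence
$$
\Psi_n(\Phi_n(f))(g_1,\,\ldots\,,\,g_n) = \sum_{g_0 \in G} c_{g_0^{-1}}\, g_0^{-1} = \sum_{h \in G} c_h h = f(g_1,\,\ldots\,,\,g_n),
$$
so $\Psi_n \circ \Phi_n = {\bf{1}}$. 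Conversely, for $\al \in I_n$,
$$
\Phi_n(\Psi_n(\al))(g_0,\,g_1,\,\ldots\,,\,g_n) = \Big\langle g_0,\, \sum_{g' \in G} \al(g',\,g_1,\,\ldots\,,\,g_n)\, (g')^{-1} \Big\rangle = \al(g_0,\,g_1,\,\ldots\,,\,g_n),
$$
since $\langle g_0,\,(g')^{-1}\rangle = 1$ exactly when $g' = g_0$. Thus $\Phi_n \circ \Psi_n = {\bf{1}}$ on $I_n$, and $\Psi_n$ is a $k$-linear isomorphism $I_n \overset{\sim}{\lra} {\rm{Hom}}_k(k[G]^{\ot n},\,k[G])$.

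Finally, the cochain-map property is automatic from $\Psi_* = \Phi_*^{-1}$. Since $I_*$ is a subcomplex, $b^*$ carries $I_n$ into $I_{n+1}$. Given $\al \in I_n$, set $f = \Psi_n(\al)$, so $\al = \Phi_n(f)$. By Lemma~\ref{Phi-lemma}, $b^*(\al) = b^*(\Phi_n(f)) = \Phi_{n+1}(\delta f)$, and therefore
$$
\Psi_{n+1}(b^*(\al)) = \Psi_{n+1}(\Phi_{n+1}(\delta f)) = \delta f = \delta(\Psi_n(\al)),
$$
i.e.\ $\Psi_{n+1} \circ b^* = \delta \circ \Psi_n$. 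This completes the verification that $\Psi_*$ is a cochain isomorphism inverse to $\Phi_*$.
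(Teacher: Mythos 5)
Your proof is correct and takes essentially the same route as the paper: both exhibit $\Psi_n$ as the inverse of $\Phi_n$ on its image $I_n$ (the paper by linear extension over the finite sums $\sum_i c_i (h_i, g_1, \ldots, g_n)^*$, you by the equivalent closed formula $\sum_{g_0} \al(g_0, g_1, \ldots, g_n)\, g_0^{-1}$) and then deduce the cochain-map property formally from Lemma~\ref{Phi-lemma} together with the injectivity of $\Phi_n$. Your write-up merely makes explicit the two-sided-inverse identities and the finiteness point that the paper leaves implicit.
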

\begin{proof}
First, $\Psi_n$ can be extended linearly over finite sums
$$  \Psi_n \big( \al ( \underline{\ \ }\, , \, g_1, \, g_2 , \, \ldots
\, , \, g_n) \big) = \sum_{i=1}^m c_i 
(h^{-1}_i, \, g_1, \, g_2 , \, \ldots \, , \, g_n)^{\#}.  $$
Second, since $\Phi_n$ is injective and a cochain map, it follows that
the following diagram commutes:
\begin{equation}
\begin{CD}
I_n  @>b^{*}>>  I_{n+1}        \\
@V{\Psi_n}VV  @VV{\Psi_{n+1}}V \\
{\rm{Hom}}_k (k[G]^{\ot n}, \, k[G]) @>>{\delta}> 
{\rm{Hom}}_k (k[G]^{\ot (n+1)}, \, k[G]) . 
\end{CD} 
\end{equation}
\end{proof}

Let $G$ be a discrete group and let $B_*(G)$ be the simplicial bar
construction on $G$.  By definition, $B_n(G) = G^n$, $n = 0$, 1, 2,
$\ldots \, $, with face maps 
\begin{align*}
& d_i : G^n \to G^{n-1}, \ \ \ i = 0, \ 1, \ 2, \, \ldots \, , \, n, \\
& d_i (g_1, \, g_2, \, \ldots \, , \, g_n) = \begin{cases} (g_2, \, \ldots \,
               , \, g_n), & i = 0, \\
               (g_1, \, g_2, \, \ldots \, , \, g_i g_{i+1}, \, \ldots
               \, , \, g_n), & i = 1, \, 2, \, \ldots \, , \, n-1, \\
               (g_1, \, g_2, \, \ldots \, , \, g_{n-1}),  & i =
               n, \end{cases}
\end{align*}
and degeneracies 
\begin{align*}
& s_i : G^n \to G^{n+1}, \ \ \ i= 0, \ 1, \ \ldots \, , \ n, \\
& s_i(g_1, \, g_2, \, \ldots \, , \, g_n) = (g_1, \, \ldots \, , \,
g_i, \, 1, \, g_{i+1}, \, \ldots \, , \, g_n).  
\end{align*}
Of course, the geometric realization $|B_*(G)|$ is a model for the
classifying space $BG$, up to homotopy.
Let $N^{\rm{cy}}_*(G)$ denote the cyclic bar construction
\cite[7.3.10]{cyclic-hom} on $G$ with $N^{\rm{cy}}_n(G) = G^{n+1}$, $n =
0$, 1, 2, $\ldots \ $.  The face maps $d_i : G^{n+1} \to G^n$ and
degeneracies $s_i : G^{n+1} \to G^{n+2}$ are given by adopting the
formulas  \eqref{face}--\eqref{degeneracies}.  For the geometric
realization $|N^{\rm{cy}}_*(G)|$, there is a homotopy equivalence 
\cite{BF, Goodwillie} \cite[7.3.11]{cyclic-hom} 
$$  \lambda : |N^{\rm{cy}}_*(G)| \overset{\simeq}{\longrightarrow} 
{\rm{Maps}}(S^1, \ BG) := {\cal{L}}BG,  $$
where $S^1$ denotes the unit circle.  Let $H_*$ denote singular
homology and $H^*$ denote singular cohomology.  There are isomorphisms
\begin{align*}
&  \lambda_* : HH_*(k[G]; \, k[G]) \overset{\simeq}{\longrightarrow}
H_* ( {\cal{L}}BG; \, k), \\
& \lambda^* : H^* ( {\cal{L}}BG; \, k) \overset{\simeq}{\longrightarrow}
HH^*_{\cal{K}}( k[G] ).
\end{align*}
The map $\lambda^*$ preserves the cup-$i$ products.  For 
$\al \in H^p ( {\cal{L}}BG; \, k)$ and 
$\be \in H^q ( {\cal{L}}BG; \, k)$, we have
$$  \lambda^* (\al \underset{i, \, S}{\cdot} \be ) =
\lambda^* (\al) \underset{i, \, S}{\cdot} \lambda^*(\be).  $$
Thus, with $\bz /2$ coefficients, $\lambda^*$ is a map of modules over the
Steenrod algebra.

There are maps of simplicial sets
\begin{align*}
&  \iota : B_*(G) \to N^{\rm{cy}}_*(G), \ \ \  \pi : N^{\rm{cy}}_*(G) \to
B_*(G), \\
&  \iota : G^n \to G^{n+1}, \ \ \ \pi: G^{n+1} \to G^n , \\
&  \iota (g_1, \, g_2, \, \ldots \, , \, g_n) = ( (g_1 g_2 \ldots
g_n)^{-1}, \, g_1, \, g_2, \, \ldots \, , \, g_n), \\
&  \pi (g_0, \, g_1, \, g_2, \, \ldots \, , \, g_n) = 
(g_1, \, g_2, \, \ldots \, , \, g_n).    
\end{align*}
Let $\iota_* : H_*(BG; \, k) \to H_*({\cal{L}}BG; \, k)$ and 
$\pi_* : H_*({\cal{L}}BG; \, k) \to H_*(BG; \, k)$ be the induced maps 
on homology, $\iota^*  : H^*({\cal{L}}BG; \, k) \to H^*(BG; \, k)$,
$\pi^* : H^*(BG; \, k) \to H^*({\cal{L}}BG; \, k)$ the induced maps on
cohomology.  Since $ \pi \circ \iota = {\bf{1}}$ on $B_*(G)$, we have
splittings of $k$-modules:
\begin{align*}
&  H_*({\cal{L}}BG; \, k) \simeq H_*(BG; \, k) \oplus
{\rm{Ker}}(\pi_*) \\
&  H^*({\cal{L}}BG; \, k) \simeq H^*(BG; \, k) \oplus
{\rm{Ker}}(\iota^*).  
\end{align*}

\begin{lemma}
Let $N^{\rm{cy}}_n(G, \, e) = \{ (g_0, \, g_1, \, \ldots \, , \, g_n)
\in G^{n+1} \, | \, g_0g_1 \ldots g_n =e \}$ for $n = 0, \ 1, \ 2, \, 
\ldots \ $.  Then
\begin{itemize}
\item[(i)] $N^{\rm{cy}}_*(G, \, e)$ is a subsimplicial set of 
$N^{\rm{cy}}_*(G)$.
\item[(ii)] Im$(\iota ) = N^{\rm{cy}}_*(G, \, e)$.
\end{itemize}
\end{lemma}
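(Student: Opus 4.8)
The plan is to verify both assertions directly from the defining formulas for the face maps, degeneracies, and the map $\iota$. For part (i) I would take an arbitrary simplex $(g_0, \, g_1, \, \ldots \, , \, g_n) \in N^{\rm{cy}}_n(G, \, e)$, so that $g_0 g_1 \cdots g_n = e$, and check that each structure map of $N^{\rm{cy}}_*(G)$ carries it back into $N^{\rm{cy}}_*(G, \, e)$. For the face maps $d_i = b_i$ with $0 \leq i \leq n-1$, applying $b_i$ merely replaces the adjacent pair $g_i, \, g_{i+1}$ by their product $g_i g_{i+1}$; the total product of the entries is thereby left unchanged and still equals $e$. For the degeneracies $s_i$, inserting the identity $1$ between $g_i$ and $g_{i+1}$ again does not alter the product, so the defining condition persists and $s_i (g_0, \, \ldots \, , \, g_n) \in N^{\rm{cy}}_{n+1}(G, \, e)$.

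The one case requiring a small observation is the cyclic face map $d_n = b_n$, which sends $(g_0, \, \ldots \, , \, g_n)$ to $(g_n g_0, \, g_1, \, \ldots \, , \, g_{n-1})$. Here the product of the entries is $g_n g_0 g_1 \cdots g_{n-1}$, a cyclic rearrangement of the original. Since $g_0 g_1 \cdots g_n = e$ gives $g_0 g_1 \cdots g_{n-1} = g_n^{-1}$, I then compute $g_n (g_0 g_1 \cdots g_{n-1}) = g_n g_n^{-1} = e$. This is the only point at which the cyclic structure of $N^{\rm{cy}}_*(G)$, as opposed to the plain bar construction, enters, and it simply records the elementary fact that a product of group elements equals $e$ if and only if any cyclic permutation of the factors does. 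Together these three checks show closure under all face and degeneracy maps, establishing (i).

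For part (ii) I would argue by double inclusion. The inclusion ${\rm{Im}}(\iota) \subseteq N^{\rm{cy}}_*(G, \, e)$ is immediate: for any $(g_1, \, \ldots \, , \, g_n) \in G^n$ the product of the entries of $\iota(g_1, \, \ldots \, , \, g_n) = ((g_1 \cdots g_n)^{-1}, \, g_1, \, \ldots \, , \, g_n)$ is $(g_1 \cdots g_n)^{-1}(g_1 \cdots g_n) = e$. For the reverse inclusion, given $(g_0, \, g_1, \, \ldots \, , \, g_n)$ with $g_0 g_1 \cdots g_n = e$, the relation forces $g_0 = (g_1 g_2 \cdots g_n)^{-1}$, whence $(g_0, \, g_1, \, \ldots \, , \, g_n) = \iota(g_1, \, \ldots \, , \, g_n)$ lies in the image. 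Thus the two sets coincide.

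There is essentially no genuine obstacle here beyond bookkeeping; the only conceptual step is the cyclic-invariance computation for $d_n$ in (i), and even that reduces to the one-line identity $g_0 g_1 \cdots g_n = e \Rightarrow g_n g_0 g_1 \cdots g_{n-1} = e$. I would present (i) before (ii): establishing closure under the face and degeneracy maps is what makes $N^{\rm{cy}}_*(G, \, e)$ a bona fide subsimplicial set and so gives the assertion of (ii) its meaning, after which the identification of the image follows in a single line from the defining formula for $\iota$.
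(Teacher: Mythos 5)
Your proposal is correct and follows essentially the same route as the paper: checking closure under the face maps and degeneracies for (i), and using the relation $g_0 = (g_1 g_2 \cdots g_n)^{-1}$ to exhibit any element of $N^{\rm{cy}}_n(G, \, e)$ as $\iota(g_1, \, \ldots \, , \, g_n)$ for (ii). You simply spell out the routine verifications (the cyclic face map computation, the easy inclusion ${\rm{Im}}(\iota) \subseteq N^{\rm{cy}}_*(G, \, e)$) that the paper leaves implicit.
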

\begin{proof}
Part (i) follows since $N^{\rm{cy}}_*(G, \, e)$ is closed under the
face maps and degeneracies of $N^{\rm{cy}}_*(G)$.  For part (ii), let 
$ (g_0, \, g_1, \, \ldots \, , \, g_n) \in N^{\rm{cy}}_n(G, \, e). $
Then $g_0 g_1 \ldots g_n = e$ and $g_0 = (g_1 g_2 \ldots g_n)^{-1}$.
Thus,  
$$  \iota (g_1, \, g_2, \, \ldots \, , g_n) = (g_0 , \, g_1, \,
g_2, \, \ldots \, , g_n).  $$  
\end{proof} 
\begin{definition} \label{supported}
For fixed $\al_0$, $\al_1$, $\ldots \, $, $\al_p \in G$ with 
$$  \al = (\al_0, \, \al_1, \, \ldots \, , \, \al_p)^* \in
{\rm{Hom}}_k (k[G]^{\ot (p+1)}, \, k),  $$
we say that $\al$ is supported on $BG \simeq |N^{\rm{cy}}_*(G, \, e)|$
if the product 
$$ \al_0 \al_1 \ldots \al_p = e.  $$  
\end{definition}
Recall that $k[G^{n+1}] \simeq k[G]^{\ot (n+1)}$ is a free $k$-module
with basis given by the elements of $G^{n+1}$.  Thus, 
${\rm{Hom}}_k ( k[ N^{\rm{cy}}_*(G, \, e)], \ k)$ is a submodule of
${\rm{Hom}}_k ( k[G]^{\ot (*+1)}, \ k)$.   An element $\gamma \in
{\rm{Hom}}_k ( k[ N^{\rm{cy}}_n(G, \, e)], \ k)$ is extended to 
${\rm{Hom}}_k ( k[G]^{\ot (n+1)}, \ k)$ by setting
$\gamma (g_0, \, g_1, \, \ldots \, , \, g_n) = 0$ for $g_0 g_1 \,
\ldots \, g_n \neq e$.  A direct calculation of the coboundary $b^*$
shows that ${\rm{Hom}}_k ( k[ N^{\rm{cy}}_*(G, \, e)], \ k)$ is a
subcomplex of ${\rm{Hom}}_k ( k[G]^{\ot (*+1)}, \ k)$.  For $\al$
supported on $BG$ as defined above, $\al \in 
{\rm{Hom}}_k ( k[ N^{\rm{cy}}_*(G, \, e)], \ k)$.

\section{Cup-$i$ Products in Hochschild Cohomology}

In this section we transport the cup-$i$ products to the cochain
complex  ${\rm{Hom}}_k (k[G]^{\ot *}, \, k[G])$ by showing that the
complex $I_* = {\rm{Im}} \, \Phi_*$ is closed under Steenrod's cup-$i$
products, $i \geq 0$.   For $i = 0$, 
$$  \al \underset{0, \, S}{\cdot} \be = \al \underset{S}{\cdot}\be  $$  
is the simplicial cup product.  The Gerstenhaber and the simplicial cup
products define two product structures on $HH^*(k[G]; \, k[G])$ that
agree as cochains when evaluated on a subcomplex that represents $BG
\simeq |N^{\rm{cy}}_*(G, \, e)|$.  Gerstenhaber's pre-Lie
product agrees with Steenrod's cup-one product on this subcomplex as well.  
Unless otherwise stated, $k$ denotes a unital, commutative coefficient
ring, such as the integers.  In the special case of $\bz /2$
coefficients, the cup-$i$ products induce an action of the mod 2
Steenrod algebra on $HH^* ( \bz /2 [G]; \, \bz /2 [G])$, which
recovers the action of the Steenrod algebra on the free loop space
${\cal{L}}BG$.  

\begin{lemma}  \label{cup-closed} The subcomplex 
$I_* = {\rm{Im}}\, \Phi_* \subseteq {\rm{Hom}}_k(k[G]^{\ot {* + 1}}, \, k)$ 
is closed under the simplicial cup product.
\end{lemma}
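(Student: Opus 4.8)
The plan is to give an intrinsic description of the image $I_n$ and then check that this description is preserved by the simplicial cup product. First I would record the characterization implicit in the remark preceding the lemma: an element $\al \in \mathrm{Hom}_k(k[G]^{\ot(n+1)}, k)$ lies in $I_n$ if and only if, for every fixed tail $(g_1, \ldots, g_n) \in G^n$, the function $g_0 \mapsto \al(g_0, g_1, \ldots, g_n)$ has finite support on $G$. Indeed, if $\al = \Phi_n(f)$, then $\al(g_0, g_1, \ldots, g_n)$ is the coefficient of $g_0^{-1}$ in the finitely supported element $f(g_1, \ldots, g_n) \in k[G]$, which is nonzero for only finitely many $g_0$; conversely, given such an $\al$, the rule $f(g_1, \ldots, g_n) = \sum_{g_0} \al(g_0, g_1, \ldots, g_n)\, g_0^{-1}$ defines a $k$-linear map with $\Phi_n(f) = \al$. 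There is no finiteness requirement on the remaining variables, so membership in $I_n$ is genuinely a condition in the single variable $g_0$.

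Second, I would make the two faces appearing in the product explicit in the cyclic bar construction. For $\sg = (a_0, a_1, \ldots, a_{p+q})$, applying $d_0^{\,p}$ yields the back $q$-face $(a_0 a_1 \cdots a_p,\, a_{p+1}, \ldots, a_{p+q})$, while applying $d_{p+1} d_{p+2} \cdots d_{p+q}$ --- each a top face in the cyclic structure, hence wrapping the last coordinate around to the front --- yields the front $p$-face $(a_{p+1} a_{p+2} \cdots a_{p+q}\, a_0,\, a_1, \ldots, a_p)$. Thus for $\al \in I_p$ and $\be \in I_q$,
$$(\al \underset{S}{\cdot} \be)(a_0, \ldots, a_{p+q}) = \al(a_{p+1}\cdots a_{p+q}\, a_0,\, a_1, \ldots, a_p)\cdot \be(a_0 a_1 \cdots a_p,\, a_{p+1}, \ldots, a_{p+q}).$$

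Finally, I would apply the characterization to this formula. Fix a tail $(a_1, \ldots, a_{p+q}) \in G^{p+q}$ and set $w = a_{p+1} \cdots a_{p+q}$, a fixed element of $G$. As $a_0$ ranges over $G$, the first argument $w a_0$ of $\al$ ranges bijectively over $G$, since left translation by $w$ is a bijection. Because $\al \in I_p$ is finite in its first variable for the fixed tail $(a_1, \ldots, a_p)$, the factor $\al(w a_0, a_1, \ldots, a_p)$ vanishes for all but finitely many $a_0$; hence so does the entire product, regardless of the $\be$ factor. Therefore $\al \underset{S}{\cdot} \be$ is finite in its first variable for every fixed tail, so it lies in $I_{p+q}$, which is the claim. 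Note that only the finiteness of $\al$ is used; $\be \in I_q$ is not needed for closure, though it holds by hypothesis.

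I expect the only real obstacle to be the correct bookkeeping of the front face: in the cyclic bar construction $d_{p+q}$ is the wraparound face, so the composite $d_{p+1} \cdots d_{p+q}$ does not simply truncate $\sg$ but collapses $a_{p+1}, \ldots, a_{p+q}$ onto $a_0$ in the leading slot. Pinning down this merging --- and the resulting dependence on $a_0$ purely through left translation --- is what makes the finiteness argument go through; everything else is routine.
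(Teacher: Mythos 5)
Your proof is correct, and it is organized differently from the paper's. The paper reduces by linearity to dual basis elements $\al = (\al_0, \al_1, \ldots, \al_p)^*$ and $\be = (\be_0, \be_1, \ldots, \be_q)^*$, writes out the same two faces you do, and then observes that the resulting system of equations for nonvanishing is over-determined: for a fixed tail $(g_1, \ldots, g_{p+q})$ there is at most one $g_0$ with $(\al \underset{S}{\cdot} \be)(g_0, g_1, \ldots, g_{p+q}) \neq 0$, and when it is nonzero the product is identified explicitly as $(\be_0 (\al_1 \cdots \al_p)^{-1}, \al_1, \ldots, \al_p, \be_1, \ldots, \be_q)^*$. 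You instead promote the remark preceding the lemma into an intrinsic characterization of $I_n$ (finite support in the leading variable for each fixed tail, with the converse direction you supply being the part the paper leaves implicit) and verify that this characterization is preserved, using only that left translation by $w = a_{p+1} \cdots a_{p+q}$ is a bijection of $G$. Your route is cleaner on two counts: it handles general elements of $I_p$ without the paper's slightly loose ``by linearity'' step (an element of $I_n$ is a finite sum of starred generators only tail-by-tail, not globally, though this causes no real problem since membership in $I_{p+q}$ is itself checked tail-by-tail), and it yields the stronger statement that $\al \underset{S}{\cdot} \be \in I_{p+q}$ for $\al \in I_p$ and \emph{arbitrary} $\be$. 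What the paper's explicit computation buys, and yours does not, is the closed formula for $\al \underset{S}{\cdot} \be$ as a starred generator together with the precise compatibility condition $\al_0 \al_1 \cdots \al_p = \be_1 \cdots \be_q \be_0$; both are quoted and reused later (in Lemma \ref{Psi} and its sequels) to identify the simplicial cup product with the Gerstenhaber product on cochains supported on $BG$, so the extra work there is not wasted.
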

\begin{proof}
Let $\al \in I_p$ and $\be \in I_q$.  Given any $(\al_1, \, \al_2, \,
\ldots \, , \, \al_p) \in G^p$, there are only finitely many $h_i \in
G$ with
$$  \al = \sum_{i=1}^m c_i(h_i, \, \al_1, \, \al_2, \, \ldots \, , \,
\al_p)^*, \ \ \ c_i \in k, \ \ c_i \neq 0.  $$
By linearity, consider the case $i = 1$, and for ease of notation,
let
$$  \al = (\al_0, \, \al_1, \, \al_2, \, \ldots \, , \, \al_p)^* .  $$
Similarly, consider
$$  \be = (\be_0, \, \be_1, \, \be_2, \, \ldots \, , \, \be_q)^* .  $$
For $g_i \in G$, 
\begin{align*}
&  (\al \underset{S}{\cdot} \be) (g_0, \, g_1, \, \ldots \, , \, g_p,
\, g_{p+1}, \, \ldots \, , \, g_{p+q}) = \\
&  \al ( (g_{p+1}g_{p+2} \ldots g_{p+q}g_0), \ g_1, \ g_2, \ \ldots \,
, \ g_p) \be ( (g_0g_1 \ldots g_p), \ g_{p+1}, \ \ldots \, , \ g_{p+q}). 
\end{align*}
Thus, $\al \underset{S}{\cdot} \be$ is non-zero if and only if
\begin{align*}
&  (g_{p+1}g_{p+2} \ldots g_{p+q}g_0) = \al_0, \ \ g_1 = \al_1, \ \ g_2 =
\al_2, \  \ldots \, ,  \ g_p = \al_p , \\
&   (g_0g_1 \ldots g_{p-1} g_{p}) = \be_0, \ \ g_{p+1} = \be_1,\  \ g_{p+2}
= \be_2, \  \ldots \, ,  \ g_{p+q} = \be_q .  
\end{align*} 
This system of equations is over-determined and necessary conditions
that $\al \underset{S}{\cdot} \be \neq 0$ are
$$  g_0 = (\be_1 \be_2 \, \ldots \, \be_q)^{-1} \al_0 \ \ \ {\rm{and}}
\ \ \ g_0 = \be_0 (\al_1 \al_2 \, \ldots \, \al_p)^{-1}.  $$
Thus, $\al_0 \al_1 \, \ldots \, \al_p = \be_1 \be_2 \, \ldots \, \be_q
\be_0$ in order that $\al \underset{S}{\cdot} \be \neq 0$, in which case
$$   \al \underset{S}{\cdot} \be = (\be_0 (\al_1 \ldots \al_p)^{-1}, \
\al_1, \ \al_2, \ \ldots \, , \ \al_p, \  \be_1, \ \be_2, \ 
\ldots \, , \ \be_q)^{*}.  $$
Hence, there is at most only one possible choice for $g_0$ that yields
a non-zero result for 
$$  (\al \underset{S}{\cdot} \be)(\underline{\ \ },\, g_1, \, g_2, \,
\ldots \, , \, g_{p+q}).  $$  
It follows that $\al \underset{S}{\cdot} \be \in I_{p+q} = {\rm{Im}}\,
\Phi_{p+q}$.  
\end{proof}
\begin{corollary}  Let 
$$ \al = (\al_0, \, \al_1, \, \al_2, \, \ldots \, , \, \al_p)^* \in
I_p, \ \ \  \be = (\be_0, \, \be_1, \, \be_2, \, \ldots \, , \,
\be_q)^* \in I_q.  $$  
If the elements given by the products
$\al_0 \al_1 \, \ldots \, \al_p$ and $\be_0 \be_1 \, \ldots \, \be_q$
are in different conjugacy classes of $G$, then 
$\al \underset{S}{\cdot} \be = 0$.  
\end{corollary}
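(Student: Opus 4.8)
The plan is to read off the non-vanishing criterion already established in Lemma \ref{cup-closed} and reinterpret it conjugacy-theoretically. That lemma shows that for basis cochains $\al = (\al_0, \, \ldots \, , \, \al_p)^*$ and $\be = (\be_0, \, \ldots \, , \, \be_q)^*$, the simplicial cup product $\al \underset{S}{\cdot} \be$ can be nonzero only when
$$  \al_0 \al_1 \, \ldots \, \al_p = \be_1 \be_2 \, \ldots \, \be_q \be_0.  $$
Accordingly, I would argue by contraposition: assuming $\al \underset{S}{\cdot} \be \neq 0$, I will show that the two boundary products $\al_0 \al_1 \, \ldots \, \al_p$ and $\be_0 \be_1 \, \ldots \, \be_q$ must lie in a common conjugacy class, contradicting the hypothesis.

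The single observation driving the argument is that the right-hand side above is a cyclic rotation of $\be_0 \be_1 \, \ldots \, \be_q$, and any such rotation is a conjugate of the original product. Explicitly,
$$  \be_1 \be_2 \, \ldots \, \be_q \be_0 = \be_0^{-1} (\be_0 \be_1 \, \ldots \, \be_q) \be_0,  $$
so $\be_1 \, \ldots \, \be_q \be_0$ is conjugate to $\be_0 \, \ldots \, \be_q$ via $\be_0$. Combining this with the non-vanishing criterion yields
$$  \al_0 \al_1 \, \ldots \, \al_p = \be_0^{-1} (\be_0 \be_1 \, \ldots \, \be_q) \be_0,  $$
which exhibits $\al_0 \, \ldots \, \al_p$ as a conjugate of $\be_0 \, \ldots \, \be_q$; hence these two elements share a conjugacy class.

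Taking the contrapositive then completes the proof: if $\al_0 \, \ldots \, \al_p$ and $\be_0 \, \ldots \, \be_q$ lie in distinct conjugacy classes, the non-vanishing criterion cannot be satisfied, forcing $\al \underset{S}{\cdot} \be = 0$. I expect no genuine obstacle here, since the substantive content is entirely contained in Lemma \ref{cup-closed}; the corollary is merely the remark that cyclic rotation of a group word is conjugation. The only point requiring care is the bookkeeping confirming that the rotation $\be_0 \be_1 \, \ldots \, \be_q \mapsto \be_1 \, \ldots \, \be_q \be_0$ really is conjugation by $\be_0$, which the displayed identity verifies directly.
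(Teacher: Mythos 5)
Your proof is correct and matches the paper's approach: the corollary appears there without its own proof, precisely because it follows from Lemma \eqref{cup-closed} exactly as you argue. The lemma's non-vanishing criterion $\al_0 \al_1 \cdots \al_p = \be_1 \be_2 \cdots \be_q \be_0$, combined with your observation that the cyclic rotation satisfies $\be_1 \cdots \be_q \be_0 = \be_0^{-1}(\be_0 \be_1 \cdots \be_q)\be_0$ and is therefore conjugate to $\be_0 \be_1 \cdots \be_q$, gives the contrapositive immediately.
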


\begin{definition}
For $f \in {\rm{Hom}}_k(k[G]^{\ot p}, \, k[G])$ and
$g \in {\rm{Hom}}_k(k[G]^{\ot q}, \, k[G])$, define the simplicial cup
product 
$$  f \underset{S}{\cdot} g \in  {\rm{Hom}}_k(k[G]^{\ot {(p+q)}}, \,
k[G])  $$
by  $f \underset{S}{\cdot} g = \Psi_{p+q}( \Phi_p(f)
\underset{S}{\cdot} \Phi_q (g))$.  
\end{definition}
\begin{lemma}
For $f \in HH^p(k[G]; \, k[G])$ and $g \in HH^q(k[G]; \, k[G])$, we
have $f \underset{S}{\cdot} g \in   HH^{p+q}(k[G]; \, k[G])$.  
\end{lemma}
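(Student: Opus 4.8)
The plan is to derive a graded Leibniz rule for the coboundary $\delta$ with respect to the transported product $\underset{S}{\cdot}$ on ${\rm{Hom}}_k(k[G]^{\ot *}, \, k[G])$, and then run the standard argument that any cochain product satisfying such a rule descends to cohomology. Everything needed is already in place: $\Phi_*$ is a cochain map (Lemma \ref{Phi-lemma}) satisfying $\Phi_{p+1}(\delta f) = b^*(\Phi_p(f))$; the map $\Psi_*$ is a cochain map inverse to $\Phi_*$ on $I_*$, so that $\delta \circ \Psi_n = \Psi_{n+1} \circ b^*$ and $\Psi_* \circ \Phi_* = \mathbf{1}$; the subcomplex $I_*$ is closed under the simplicial cup product (Lemma \ref{cup-closed}); and on the $b^*$-complex we already have the Leibniz rule $b^*(\al \underset{S}{\cdot} \be) = b^*(\al) \underset{S}{\cdot} \be + (-1)^p \al \underset{S}{\cdot} b^*(\be)$.

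First I would compute $\delta(f \underset{S}{\cdot} g)$ directly from the definition $f \underset{S}{\cdot} g = \Psi_{p+q}(\Phi_p(f) \underset{S}{\cdot} \Phi_q(g))$. Since $\Phi_p(f) \underset{S}{\cdot} \Phi_q(g) \in I_{p+q}$ by Lemma \ref{cup-closed}, and $\Psi_*$ is a cochain map, we obtain
$$  \delta(f \underset{S}{\cdot} g) = \Psi_{p+q+1}\big( b^*(\Phi_p(f) \underset{S}{\cdot} \Phi_q(g)) \big).  $$
Applying the $b^*$-Leibniz rule inside and then using that $\Phi_*$ is a cochain map, this becomes
$$  \delta(f \underset{S}{\cdot} g) = \Psi_{p+q+1}\big( \Phi_{p+1}(\delta f) \underset{S}{\cdot} \Phi_q(g) + (-1)^p \Phi_p(f) \underset{S}{\cdot} \Phi_{q+1}(\delta g) \big).  $$
Each summand again lies in $I_{p+q+1}$, since $I_*$ is a subcomplex and is closed under $\underset{S}{\cdot}$, so $\Psi_{p+q+1}$ may be distributed across the sum by $k$-linearity, and each term collapses under the definition of the transported product, yielding
$$  \delta(f \underset{S}{\cdot} g) = (\delta f) \underset{S}{\cdot} g + (-1)^p f \underset{S}{\cdot} (\delta g).  $$

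With the Leibniz rule in hand the conclusion is immediate and requires no further computation. If $f$ and $g$ are cocycles then both terms on the right vanish, so $f \underset{S}{\cdot} g$ is a cocycle of degree $p+q$; and if, say, $f = \delta f'$ is a coboundary while $g$ is a cocycle, the same rule gives $f \underset{S}{\cdot} g = \delta(f' \underset{S}{\cdot} g)$, a coboundary, and symmetrically in the second variable. Hence $\underset{S}{\cdot}$ descends to a well-defined map $HH^p(k[G]; \, k[G]) \ot HH^q(k[G]; \, k[G]) \to HH^{p+q}(k[G]; \, k[G])$, which is the assertion.

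I do not expect a genuine obstacle here, since the previous lemmas have done the structural work: the product is defined by transporting along an isomorphism of cochain complexes, so all of its formal properties are inherited. The one point that warrants care is the legitimacy of pushing $\Psi_{p+q+1}$ through the two-term sum term by term, which is valid only because each summand $\Phi_{p+1}(\delta f) \underset{S}{\cdot} \Phi_q(g)$ and $\Phi_p(f) \underset{S}{\cdot} \Phi_{q+1}(\delta g)$ lies in the image of $\Phi$ — exactly what Lemma \ref{cup-closed}, together with $I_*$ being a subcomplex, guarantees. Tracking the single sign $(-1)^p$ correctly through the transport is the only bookkeeping involved.
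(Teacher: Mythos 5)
Your proposal is correct and follows essentially the same route as the paper: both compute $\delta(f \underset{S}{\cdot} g) = \Psi_{p+q+1}\big(b^*(\Phi_p(f) \underset{S}{\cdot} \Phi_q(g))\big)$, apply the $b^*$-Leibniz rule, use that $\Phi_*$ is a cochain map, and collapse via the definition of the transported product to obtain $\delta(f \underset{S}{\cdot} g) = (\delta f) \underset{S}{\cdot} g + (-1)^p f \underset{S}{\cdot} (\delta g)$. The only difference is that you spell out the standard cocycle/coboundary argument that the paper leaves implicit.
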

\begin{proof}
That the simplicial cup product is well-defined on $HH^{*}(k[G]; \,
k[G])$ follows from
\begin{align*}
\delta ( f \underset{S}{\cdot} g) & = \delta (\Psi_{p+q}( \Phi_p(f)
\underset{S}{\cdot} \Phi_q (g))) \\
& = \Psi_{p+q+1} ( b^*  ( \Phi_p(f) \underset{S}{\cdot} \Phi_q (g)) ) \\
& = \Psi_{p+q+1}(b^*(\Phi_p(f)) \underset{S}{\cdot} \Phi_q(g) +  
(-1)^p \Phi_p (f) \underset{S}{\cdot} b^*( \Phi_q (g)) ) \\
& = \Psi_{p+q+1}( \Phi_{p+1} (\delta f) \underset{S}{\cdot} \Phi_q (g) +
(-1)^p \Phi_p (f) \underset{S}{\cdot} \Phi_{q+1} (\delta g) )  \\
& = (\delta f) \underset{S}{\cdot} g + (-1)^p f  \underset{S}{\cdot}
(\delta g).
\end{align*}
\end{proof}

\begin{lemma}  \label{cup-one-closed} 
The subcomplex $I_* = {\rm{Im}} \, \Phi_*$ is closed under the 
cup-one product.
\end{lemma}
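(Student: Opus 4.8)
The plan is to mimic the proof of Lemma \ref{cup-closed}: reduce to basis cochains by linearity and then read off a finiteness constraint in the zeroth slot. Recall from the definition of $I_*$ that a cochain $\gamma \in {\rm{Hom}}_k(k[G]^{\ot(n+1)}, \, k)$ lies in $I_n$ precisely when, for each fixed $(g_1, \, \ldots \, , \, g_n) \in G^n$, the expression $\gamma(\underline{\ \ }, \, g_1, \, \ldots \, , \, g_n)$ is a finite sum $\sum_i c_i (h_i, \, g_1, \, \ldots \, , \, g_n)^*$, i.e. there are only finitely many $g_0 \in G$ with $\gamma(g_0, \, g_1, \, \ldots \, , \, g_n) \neq 0$. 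So to prove $\al \underset{1, \, S}{\cdot} \be \in I_{p+q-1}$ for $\al \in I_p$ and $\be \in I_q$, I would fix $(g_1, \, \ldots \, , \, g_{p+q-1}) \in G^{p+q-1}$ and show that $(\al \underset{1, \, S}{\cdot} \be)(g_0, \, g_1, \, \ldots \, , \, g_{p+q-1})$ vanishes for all but finitely many $g_0$. By bilinearity of the cup-one product this reduces to basis cochains $\al = (\al_0, \, \al_1, \, \ldots \, , \, \al_p)^*$ and $\be = (\be_0, \, \ldots \, , \, \be_q)^*$.

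The key step is to compute the front face $d_{j+1} \, d_{j+2} \, \ldots \, d_{j+q-1}(\sigma)$ on which $\al$ is evaluated, where $\sigma = (g_0, \, g_1, \, \ldots \, , \, g_{p+q-1})$. I would verify that, for each $j$ with $0 \leq j \leq p-1$, every face map in this composition is an interior (non-wrap-around) face at the moment it is applied: the largest index appearing is $j+q-1 \leq p+q-2$, strictly below the top index $p+q-1$ of $\sigma$, and as the faces $d_{j+q-1}, \, d_{j+q-2}, \, \ldots \, , \, d_{j+1}$ are applied the dimension and the index decrease together, so the inequality $j < p$ keeps every face interior. Hence the composition only multiplies the block $g_{j+1} g_{j+2} \ldots g_{j+q}$ together and leaves the zeroth entry untouched:
$$  d_{j+1} \, \ldots \, d_{j+q-1}(\sigma) = (g_0, \, g_1, \, \ldots \, , \, g_j, \ g_{j+1} g_{j+2} \ldots g_{j+q}, \ g_{j+q+1}, \, \ldots \, , \, g_{p+q-1}).  $$
In particular the zeroth coordinate is exactly $g_0$, so $\al$ evaluated here is nonzero only if $g_0 = \al_0$.

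Since $\al_0$ is a fixed element once $\al$ is a basis cochain, this forces every summand (over $j = 0, \, \ldots \, , \, p-1$) of $(\al \underset{1, \, S}{\cdot} \be)(g_0, \, g_1, \, \ldots \, , \, g_{p+q-1})$ to vanish unless $g_0 = \al_0$. Thus for fixed $(g_1, \, \ldots \, , \, g_{p+q-1})$ there is at most one value of $g_0$ giving a nonzero result, the finiteness criterion holds, and $\al \underset{1, \, S}{\cdot} \be \in I_{p+q-1}$. The same conclusion holds verbatim for a general $\al \in I_p$: for each $j$ the tail slots of $\al$ are the fixed data $(g_1, \, \ldots \, , \, g_j, \ g_{j+1} \ldots g_{j+q}, \ g_{j+q+1}, \, \ldots \, , \, g_{p+q-1})$, so $\al(\underline{\ \ }, \, \text{fixed tail})$ has finite support in its head slot $g_0$; finitely many $j$ then give overall finite support in $g_0$.

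I expect the only real obstacle to be the bookkeeping that confirms the front-face composition never invokes the top wrap-around face. This is exactly what separates the cup-one front face from the cup-zero front face of Lemma \ref{cup-closed}, where $d_{p+1} \, \ldots \, d_{p+q}$ begins with the wrap-around $d_{p+q}$ and therefore entangles $g_0$ with the tail entries. Once the interior-face check is secured the zeroth coordinate is visibly $g_0$ and closure is immediate. The back face $d_0 \, \ldots \, d_{j-1} \, d_{j+q+1} \, \ldots \, d_{p+q-1}(\sigma)$ does wrap around and accumulate $g_0$ into its zeroth entry, but since $g_0$ is already pinned down by the front factor it is irrelevant to the finiteness argument and need only be recorded if one wants the explicit basis expression for the product.
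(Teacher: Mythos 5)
Your proof is correct and follows essentially the same route as the paper's: reduce to basis cochains by linearity, observe that the $j$th summand of the cup-one product evaluates $\al$ on a front face whose zeroth coordinate is exactly $g_0$ (all faces in $d_{j+1} \cdots d_{j+q-1}$ being interior since $j \leq p-1$), so nonvanishing forces $g_0 = \al_0$, giving finite support in the zeroth slot and hence membership in $I_{p+q-1}$. The differences are minor and presentational: you make the interior-face bookkeeping and the finite-support characterization of $I_*$ explicit (and note that only $\al \in I_p$ is really needed), whereas the paper instead writes out the over-determined system and the explicit basis form of $(\al \underset{1, \, S}{\cdot} \be)_j$, which it reuses later in Theorem \eqref{j-th-term}.
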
  
\begin{proof}
Let $\al \in I_p$, $\be \in I_q$ and consider the case
$$  \al = (\al_0 , \, \al_1, \, \ldots \, , \, \al_p)^*, \ \ \ 
\be = (\be_0 , \, \be_1, \, \, \ldots \, , \, \be_q )^*  $$
as in Lemma \eqref{cup-closed}.  Let $\sigma = (g_0, \, g_1, \, \ldots
\, , \, g_{p+q-1}) \in G^{p+q}$.  The cup-one product $\al
\underset{1, \, S}{\cdot} \be$ is a sum of $p$-many terms indexed by
$j = 0$, 1, 2, $\ldots \,$, $p-1$.  Consider the $j$th term
\begin{align*}
& (\al \underset{1, \, S}{\cdot} \be)_j (\sigma ) = \\
& \al (g_0, \ g_1, \ \ldots \, , \ g_j, \ (g_{j+1}g_{j+2} \ldots
g_{j+q}), \ g_{j+q+1}, \ \ldots \, , \ g_{p+q-1}) \\
& \cdot \be ( (g_{j+q+1}g_{j+q+2} \ldots g_{p+q-1}g_0 g_1 \ldots g_j),
\ g_{j+1}, \ g_{j+2}, \ \ldots \, , \ g_{j+q})
\end{align*}  
In order that $(\al \underset{1, \, S}{\cdot} \be)_j
(\sigma ) \neq 0$, we need 
\begin{align*}
& g_0 = \al_0, \ \ g_1 = \al_1, \ \ldots \, , \ g_j = \al_j, \ \ 
(g_{j+1} g_{j+2} \ldots g_{j+q}) = \al_{j+1}, \\
& g_{j+q+1} = \al_{j+2}, \ \ g_{j+q+2} = \al_{j+3}, 
\ \ldots \, , \ g_{p+q-1} = \al_p , \\
& (g_{j+q+1}g_{j+q+2} \ldots g_{p+q-1}g_0g_1 \ldots g_j) = \be_0, \\
& g_{j+1} = \be_1, \ \ g_{j+2} = \be_2, 
\ \ldots \, , \ g_{j+q} = \be_q . 
\end{align*} 
The above system of equations is over-determined and necessary
conditions that 
$(\al \underset{1, \, S}{\cdot} \be)_j  \neq 0$ 
are 
$$  \be_1 \be_2 \, \ldots \, \be_q = \al_{j+1} \ \ \ 
{\rm{and}} \ \ \ \al_{j+2} \al_{j+3} \, \ldots \, \al_p \al_0 \al_1 \,
\ldots \, \al_j = \be_0,  $$  
in which case
$$  (\al \underset{1, \, S}{\cdot} \be)_j = (\al_0, \ \al_1, \ \ldots
\, , \ \al_j, \ \be_1, \ \be_2, \ \ldots \, , \ \be_q, \ \al_{j+2}, \
\al_{j+3}, \ \ldots \, , \ \al_p)^* .  $$
It follows that $\al \underset{1, \, S}{\cdot} \be \in I_{p+q-1}$.  
\end{proof}
\begin{lemma}
The subcomplex $I_*$ is closed under the cup-$i$ products, $i \geq 2$.
\end{lemma}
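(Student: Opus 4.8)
The plan is to extend the combinatorial analysis of Lemmas \eqref{cup-closed} and \eqref{cup-one-closed} to all $i \geq 2$, using the following characterization of membership in $I_*$ that is implicit in the construction of $\Phi_*$: a cochain $\al \in {\rm{Hom}}_k(k[G]^{\ot(n+1)}, \, k)$ lies in $I_n$ if and only if, for every fixed $(g_1, \, \ldots \, , \, g_n) \in G^n$, the function $g_0 \mapsto \al(g_0, \, g_1, \, \ldots \, , \, g_n)$ has finite support. Indeed, given such an $\al$ one recovers a preimage by setting $f(g_1, \, \ldots \, , \, g_n) = \sum_{g_0} \al(g_0, \, g_1, \, \ldots \, , \, g_n)\, g^{-1}_0$, a finite sum in $k[G]$, and $\Phi_n(f) = \al$ since $\langle g_0, \, g^{-1}_0 \rangle = 1$. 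Thus, to prove $\al \underset{i, \, S}{\cdot} \be \in I_{p+q-i}$ for $\al \in I_p$ and $\be \in I_q$, it suffices to fix an arbitrary tail $(g_1, \, \ldots \, , \, g_{p+q-i}) \in G^{p+q-i}$ and show that $(\al \underset{i, \, S}{\cdot} \be)(\underline{\ \ }, \, g_1, \, \ldots \, , \, g_{p+q-i})$ is supported on finitely many $g_0$.

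First I would write $\al \underset{i, \, S}{\cdot} \be$, evaluated on $\sg = (g_0, \, g_1, \, \ldots \, , \, g_{p+q-i})$, through the face-map form of Steenrod's cup-$i$ product deduced from \cite{McClure, Steenrod}. This is a finite sum, indexed by the admissible systems of $i$ cut points, of terms $\al(A) \cdot \be(B)$, where $A$ and $B$ are a $p$-face and a $q$-face of $\sg$ obtained by applying compositions of the cyclic face maps $d_j$ of $N^{\rm{cy}}_*(G)$. As in the cases $i = 0, 1$, these faces split the coordinates of $\sg$ into $i+1$ cyclically consecutive blocks dealt alternately to $\al$ and $\be$, and every slot of $A$ and of $B$ is a product of consecutive coordinates $g_m$. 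The crucial structural point, already visible in the displayed outputs for $i = 0$ and $i = 1$, is that the basepoint coordinate $g_0$ remains the leading entry under every face map (the $d_j$ with $j \geq 1$ leave it untouched, while $d_0$ and the wrap-around top face only enlarge the leading slot to a product still containing $g_0$), so $g_0$ occurs only in the zeroth slot of $A$ and of $B$, and in at least one of them it genuinely appears.

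Next I would fix the tail $(g_1, \, \ldots \, , \, g_{p+q-i})$ and examine a single term $\al(A) \cdot \be(B)$. Every slot of $A$ and of $B$ other than their two zeroth slots is a product of the fixed coordinates $g_1, \, \ldots \, , \, g_{p+q-i}$, hence is determined; in particular the tails of $A$ and of $B$ are fixed group elements. Since $\al \in I_p$ and $\be \in I_q$, for these fixed tails only finitely many values of the zeroth slot of $A$, and of the zeroth slot of $B$, make $\al(A)$, respectively $\be(B)$, nonzero. Now at least one zeroth slot genuinely involves $g_0$, and there it is a wrap-around product $w_1\, g_0\, w_2$ with $w_1, w_2$ fixed; prescribing its (finitely many) admissible values restricts $g_0$ to finitely many elements $w^{-1}_1(\,\cdot\,)w^{-1}_2$. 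Hence each term is nonzero for only finitely many $g_0$, and, being a finite sum of such terms, so is $(\al \underset{i, \, S}{\cdot} \be)(\underline{\ \ }, \, g_1, \, \ldots \, , \, g_{p+q-i})$. By the characterization above, $\al \underset{i, \, S}{\cdot} \be \in I_{p+q-i}$.

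The main obstacle is the bookkeeping in the general cup-$i$ formula: one must extract the explicit description of the faces $A$ and $B$ in terms of the cyclic face maps $d_j$ and verify, uniformly over the admissible cut systems, that $g_0$ never migrates out of a zeroth slot and that at least one zeroth-slot matching condition always isolates it. This is precisely the feature that would fail for an arbitrary simplicial operation but is guaranteed here by the cyclic block structure of the cup-$i$ diagonal, in which the first and last blocks both abut the basepoint $g_0$; once this is checked, the finiteness argument is identical for every $i$.
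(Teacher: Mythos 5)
Your proof is correct and follows essentially the same route as the paper's: the paper likewise expands $\al \underset{i, \, S}{\cdot} \be$ as a finite sum over overlapping partitions with $i+2$ pieces and observes that each summand, for a fixed tail $(g_1, \, \ldots \, , \, g_{p+q-i})$, admits only finitely many (indeed, for basis cochains, at most one) choices of $g_0$ yielding a nonzero value, hence lies in $I_{p+q-i}$. Your explicit finite-support characterization of $I_* = {\rm{Im}}\,\Phi_*$ and your verification that $g_0$ persists only in the zeroth (wrap-around) slots under all face maps simply make precise what the paper's brief appeal to ``over-determined'' systems of equations leaves implicit.
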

\begin{proof}
Let $\al \in I_p$, $\be \in I_q$.  
The cup-$i$ products are given in terms of overlapping
partitions of $(g_0, \, g_1, \, \ldots \, , \, g_{p+q-i})$ with $i+2$
many pieces \cite{McClure}.  Each overlapping partition yields a
summand of $\al \underset{i, \, S}{\cdot} \be$, and 
each summand of 
$$  (\al \underset{i, \, S}{\cdot} \be) (g_0, \, g_1, \, \ldots \, , \, g_{p+q-i}) $$ 
is over-determined by $(i+1)$-many equations.  Thus, given
$(g_1, \, g_2, \, \ldots \, , \, g_{p+q-i}) \in G^{p+q-i}$, for each
summand $(\al \underset{i, \, S}{\cdot} \be)_{(j_1, \, j_2, \, \ldots
  \, , j_i)}$, there is only one
possible choice of $g_0$ with
$$   (\al \underset{i, \, S}{\cdot} \be)_{(j_1, \, j_2, \, \ldots
  \, , j_i)}(g_0, \, g_1, \, \ldots \, , \, g_{p+q-i}) \neq 0 .  $$
Thus, $\al \underset{i, \, S}{\cdot} \be \in I_{p+q-i}$.  
\end{proof}

\begin{definition}
For $f \in {\rm{Hom}}_k(k[G]^{\ot p}, \, k[G])$ and
$g \in {\rm{Hom}}_k(k[G]^{\ot q}, \, k[G])$, define the cup-$i$ 
product 
$$  f \underset{i, \, S}{\cdot} g \in  {\rm{Hom}}_k(k[G]^{\ot {(p+q-i)}}, \,
k[G])  $$
by  $f \underset{i, \, S}{\cdot} g = \Psi_{p+q-i}( \Phi_p(f)
\underset{i, \, S}{\cdot} \Phi_q (g))$.  
\end{definition}
\begin{lemma}
For $i \geq 1$, 
\begin{align*}
& \delta( f \underset{i, \, S}{\cdot} g) = (\delta f ) \underset{i, \, S}{\cdot} g 
+ (-1)^{p-1} f  \underset{i, \, S}{\cdot} (\delta g) \\
& \hspace{.7in} + (-1)^p [ (-1)^{(i-1)(p+q+1)} f  \underset{i-1, \, S}{\cdot} g - 
(-1)^{pq} g  \underset{i-1, \, S}{\cdot} f ].
\end{align*}
\end{lemma}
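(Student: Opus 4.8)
The plan is to deduce the formula entirely from the already-recorded Leibniz rule for the cup-$i$ products on the simplicial complex ${\rm{Hom}}_k(k[G]^{\ot(*+1)}, \, k)$ by transporting that rule across the cochain maps $\Phi_*$ and $\Psi_*$; no direct manipulation of the defining formulas for the cup-$i$ products is needed. By definition of the Hochschild cup-$i$ product, together with the closure lemmas (Lemmas \ref{cup-closed}, \ref{cup-one-closed}, and the $i\geq 2$ case) which guarantee that $\Phi_p(f) \underset{i, \, S}{\cdot} \Phi_q(g) \in I_{p+q-i}$ so that $\Psi$ is defined on it, we start from
$$ f \underset{i, \, S}{\cdot} g = \Psi_{p+q-i}\big(\Phi_p(f) \underset{i, \, S}{\cdot} \Phi_q(g)\big). $$
First I would apply $\delta$ and invoke the cochain-map property of $\Psi_*$ (the commuting square), turning $\delta \circ \Psi_{p+q-i}$ into $\Psi_{p+q-i+1} \circ b^*$, so that
$$ \delta\big(f \underset{i, \, S}{\cdot} g\big) = \Psi_{p+q-i+1}\Big( b^*\big(\Phi_p(f) \underset{i, \, S}{\cdot} \Phi_q(g)\big)\Big). $$

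Next I would expand the inner $b^*$ using the stated sign convention for the simplicial cup-$i$ products, applied with $\al = \Phi_p(f)$ (simplicial degree $p$) and $\be = \Phi_q(g)$ (simplicial degree $q$). This yields the four terms $b^*(\Phi_p(f)) \underset{i, \, S}{\cdot} \Phi_q(g)$, then $(-1)^{p-1}\,\Phi_p(f) \underset{i, \, S}{\cdot} b^*(\Phi_q(g))$, and finally the bracketed pair of cup-$(i-1)$ terms carrying the signs $(-1)^p(-1)^{(i-1)(p+q+1)}$ and $-(-1)^p(-1)^{pq}$. Since $\Phi_*$ is a cochain map (Lemma \ref{Phi-lemma}), I would then substitute $b^*(\Phi_p(f)) = \Phi_{p+1}(\delta f)$ and $b^*(\Phi_q(g)) = \Phi_{q+1}(\delta g)$, so that every summand now has the shape $\Phi_a(u) \underset{j, \, S}{\cdot} \Phi_b(v)$.

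Finally I would apply $\Psi_{p+q-i+1}$ to each summand, using its linearity. Each summand lies in $I_{a+b-j}$ by the closure lemmas, so by the definition of the Hochschild cup-$j$ product together with $\Psi_* \circ \Phi_* = \mathbf{1}$, one has $\Psi_{a+b-j}\big(\Phi_a(u) \underset{j, \, S}{\cdot} \Phi_b(v)\big) = u \underset{j, \, S}{\cdot} v$. Reading off the four terms gives exactly $(\delta f) \underset{i, \, S}{\cdot} g$, $(-1)^{p-1} f \underset{i, \, S}{\cdot} (\delta g)$, $(-1)^p(-1)^{(i-1)(p+q+1)} f \underset{i-1, \, S}{\cdot} g$, and $-(-1)^p(-1)^{pq} g \underset{i-1, \, S}{\cdot} f$, which is the claimed identity.

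The only real obstacle is the degree bookkeeping needed to confirm that a single application of $\Psi_{p+q-i+1}$ correctly distributes over all four terms. For the first two terms the differential raises degree by one, so the subscripts read $(p+1)+q-i = p+q-i+1$ and $p+(q+1)-i = p+q-i+1$; for the two cup-$(i-1)$ terms the degree drop is only $i-1$, giving $p+q-(i-1) = p+q-i+1$ as well. Thus all four summands land in $I_{p+q-i+1}$, and the identification above applies uniformly. The boundary case $i=1$ is included automatically, the cup-$0$ terms being the simplicial cup products $f \underset{S}{\cdot} g$ and $g \underset{S}{\cdot} f$ already shown to be well-defined.
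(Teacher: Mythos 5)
Your proposal is correct and follows essentially the same route as the paper's own proof: both start from the definition $f \underset{i, \, S}{\cdot} g = \Psi_{p+q-i}\big(\Phi_p(f) \underset{i, \, S}{\cdot} \Phi_q(g)\big)$, commute $\delta$ past $\Psi_*$ to get $\Psi_{p+q+1-i} \circ b^*$, expand $b^*$ via the stated simplicial Leibniz convention, and re-identify each summand as a Hochschild cup-$i$ or cup-$(i-1)$ product using the cochain-map property of $\Phi_*$ and the definition of the transported products. Your added remarks on the closure lemmas and degree bookkeeping only make explicit what the paper leaves implicit.
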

\begin{proof}
We have:
\begin{align*}
\delta( f \underset{i, \, S}{\cdot} g) & = \delta ( \Psi_{p+q-i}( \Phi_p(f)
\underset{i, \, S}{\cdot} \Phi_q (g))) \\
& = \Psi_{p+q+1-i} b^* (\Phi_p(f) \underset{i, \, S}{\cdot} \Phi_q (g))\\
& = \Psi_{p+q+1-i} [ b^*(\Phi_p(f)) \underset{i, \, S}{\cdot} \Phi_q (g)
+ (-1)^{p-1} \Phi_p (f) \underset{i, \, S}{\cdot} b^*( \Phi_q(g))] \\
&\  + (-1)^p \Psi_{p+q+1-i}[ (-1)^{(i-1)(p+q+1)} \Phi_p (f) \underset{i-1, \, S}{\cdot}
\Phi_q(g) \\ 
& - (-1)^{pq} \Phi_q(g) \underset{i-1, \, S}{\cdot} \Phi_p(f)]\\
& = (\delta f ) \underset{i, \, S}{\cdot} g 
+ (-1)^{p-1} f  \underset{i, \, S}{\cdot} (\delta g) \\
& \  + (-1)^p [ (-1)^{(i-1)(p+q+1)} f  \underset{i-1, \, S}{\cdot} g - 
(-1)^{pq} g  \underset{i-1, \, S}{\cdot} f ].
\end{align*}
\end{proof} 

\begin{corollary}
There is a natural action of the mod 2 Steenrod algebra on $HH^* ( \bz
/2 [G]; \, \bz /2 [G])$, 
$$  Sq^i :  HH^p (\bz /2 [G]; \, \bz /2 [G]) \rightarrow
 HH^{p+i} (\bz /2 [G]; \, \bz /2 [G]),  $$
given by
$$  Sq^i (f) := Sq_{p-i} (f) = f \underset{p-i, \, S}{\cdot} f . $$
\end{corollary}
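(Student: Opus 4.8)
The plan is to work over $k = \bz / 2$ throughout, where every sign in the coboundary formula for the cup-$i$ products collapses. The identity established just above then reads, for cochains $x$ of degree $p$, $y$ of degree $q$, and $i \geq 1$,
$$ \delta \big( x \underset{i, \, S}{\cdot} y \big) = (\delta x) \underset{i, \, S}{\cdot} y + x \underset{i, \, S}{\cdot} (\delta y) + x \underset{i-1, \, S}{\cdot} y + y \underset{i-1, \, S}{\cdot} x, $$
while for $i = 0$ the simplicial cup product is a cochain derivation with no cross terms. Recall also that $\Phi_*$ and $\Psi_*$ identify the Hochschild complex ${\rm{Hom}}_k(k[G]^{\ot *}, \, k[G])$ with the subcomplex $I_*$, so that $HH^*(\bz/2 [G]; \, \bz/2 [G]) \cong H^*(I_*)$, and that $I_*$ is closed under all the cup-$i$ products by Lemma \ref{cup-closed}, Lemma \ref{cup-one-closed}, and the subsequent closure lemma. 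Consequently the squaring operation $f \mapsto f \underset{p-i, \, S}{\cdot} f$ is internal to $I_*$, equivalently to the Hochschild complex, and it raises the degree from $p$ to $p + i$, matching the target of $Sq^i$.

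The first step is to check that $Sq^i$ carries cocycles to cocycles. Let $f$ be a cocycle of degree $p$ and put $j = p - i$. Applying the displayed formula with $x = y = f$ and $\delta f = 0$ gives
$$ \delta \big( f \underset{j, \, S}{\cdot} f \big) = f \underset{j-1, \, S}{\cdot} f + f \underset{j-1, \, S}{\cdot} f = 0 $$
over $\bz / 2$, so $Sq^i(f)$ is again a cocycle. (For $i > p$ the index $p - i$ is negative and $Sq^i$ is declared zero, recovering instability; for $i = p$ one gets the simplicial cup square $f \underset{0, \, S}{\cdot} f$.)

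The heart of the argument is independence of the cocycle representative, which is delicate precisely because $f \mapsto f \underset{j, \, S}{\cdot} f$ is quadratic rather than linear. I would prove this by the standard Steenrod computation, carried out entirely inside $I_*$ using the transported coboundary formulas. Writing $f' = f + \delta h$ with $f$ a cocycle, one expands $f' \underset{j, \, S}{\cdot} f'$ by bilinearity into $f \underset{j, \, S}{\cdot} f$ together with the cross terms $f \underset{j, \, S}{\cdot} \delta h$, $\delta h \underset{j, \, S}{\cdot} f$, and $\delta h \underset{j, \, S}{\cdot} \delta h$. The auxiliary identities (all mod $2$)
$$ f \underset{j, \, S}{\cdot} \delta h + \delta h \underset{j, \, S}{\cdot} f = \delta \big( f \underset{j, \, S}{\cdot} h + h \underset{j, \, S}{\cdot} f \big), \qquad u \underset{j, \, S}{\cdot} h + h \underset{j, \, S}{\cdot} u = \delta \big( h \underset{j, \, S}{\cdot} h \big), $$
obtained by applying the coboundary formula to $f \underset{j, \, S}{\cdot} h$, to $h \underset{j, \, S}{\cdot} f$, and to $h \underset{j, \, S}{\cdot} h$ and discarding doubled terms, then show that the sum of the first two cross terms is a coboundary and, taking $u = \delta h$ and using the second identity at index $j-1$, that $\delta h \underset{j, \, S}{\cdot} \delta h$ is separately a coboundary. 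Hence $f' \underset{j, \, S}{\cdot} f' - f \underset{j, \, S}{\cdot} f$ is a coboundary and $Sq^i$ descends to $HH^*(\bz/2 [G]; \, \bz/2 [G])$.

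Finally, to see that the $Sq^i$ assemble into an action of the Steenrod algebra, I would invoke that the transported operations are literally Steenrod's cup-$i$ products \cite{Steenrod}, organized by the sequence operad \cite{Berger, McClure}: the Adem relations, the Cartan formula, $Sq^0 = {\bf{1}}$, and naturality are formal consequences of this cup-$i$ structure, which is preserved by the cup-$i$-preserving maps $\lambda^*$ and $\Psi_*$. Because $I_*$ is closed under every cup-$i$ product, these relations are inherited by the restricted operations on $H^*(I_*) \cong HH^*(\bz/2 [G]; \, \bz/2 [G])$, and the identification with the classical squares on $H^*({\cal{L}}BG; \, \bz/2)$ follows from the same compatibility. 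The main obstacle is exactly the well-definedness of the third paragraph: it is the one place where the full ladder of cup-$i$ products, not merely the cup-zero and cup-one products, is indispensable, and where the closure of $I_*$ under the higher products is what keeps the construction internal to Hochschild cohomology.
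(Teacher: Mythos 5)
Your proposal is correct and follows essentially the same route as the paper: the paper's own proof consists of exactly your cocycle-preservation step (the cross terms in the mod 2 coboundary formula for the cup-$(p-i)$ product cancel in pairs when $x = y = f$) and then defers everything else to \cite[pp. 16--18]{Mosher}. Your remaining paragraphs --- the cross-term argument showing $f' \underset{j, \, S}{\cdot} f'$ and $f \underset{j, \, S}{\cdot} f$ differ by a coboundary when $f' = f + \delta h$, and the formal inheritance of the Steenrod-algebra relations from the cup-$i$ structure --- are precisely the content of that citation, legitimately carried out inside $I_*$ thanks to the closure lemmas.
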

\begin{proof}
If $f : ( \bz /2 [G])^{\ot p} \to \bz /2 [G]$ is a cocycle mod 2, then
it follows that $ f \underset{p-i, \, S}{\cdot} f$ is also.  For more details 
see, for example, Mosher and Tangora \cite[pp. 16--18]{Mosher}.  
\end{proof}
Thus, the action of the Steenrod algebra on group cohomology, i.e.,
the mod 2 cohomology of $BG$, can be seen in Hochschild's original
complex defining $HH^*$.

We now show that for cochains supported on $BG$, the simplicial cup
product agrees with Gerstenhaber's product.  Also, Steenrod's cup-one
product agrees with Gerstenhaber's pre-Lie product for these
cochains.  Recall Definition \eqref{supported}.   
\begin{lemma} \label{Psi}
If 
\begin{align*}
&  \al = (\al_0, \, \al_1, \, \ldots \, , \, \al_p)^* \in
{\rm{Hom}}_k (k[G]^{\ot (p+1)}, \, k) \ \ \ {\rm{and}} \\
&  \be = (\be_0, \, \be_1, \, \ldots \, , \, \be_q)^* \in
{\rm{Hom}}_k (k[G]^{\ot (q+1)}, \, k)
\end{align*}
are supported on $BG$, then as cochains
$$  \Psi_{p+q}(\al \underset{S}{\cdot} \be) = \Psi_p(\al)
\underset{G}{\cdot} \Psi_q (\be) .  $$ 
In other words, for $f = \Psi_p(\al)$ and $g = \Psi_q(\be)$, we have
$$  f \underset{S}{\cdot} g = f \underset{G}{\cdot} g.  $$
\end{lemma}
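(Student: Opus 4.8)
The plan is to verify the identity on single basis elements and then extend by bilinearity. By the same linearity reductions used in Lemma \ref{cup-closed} and built into the definition of $\Psi_n$, it suffices to treat $\al = (\al_0, \al_1, \ldots, \al_p)^{*}$ and $\be = (\be_0, \be_1, \ldots, \be_q)^{*}$ as basis elements, now subject to the two defining relations of being supported on $BG$, namely $\al_0 \al_1 \cdots \al_p = e$ and $\be_0 \be_1 \cdots \be_q = e$. I would then compute the left-hand side $\Psi_{p+q}(\al \underset{S}{\cdot} \be)$ and the right-hand side $\Psi_p(\al) \underset{G}{\cdot} \Psi_q(\be)$ separately in the $(\cdots)^{\#}$ notation and observe that they coincide.

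For the simplicial side, first note that the support hypotheses force the nonvanishing condition of Lemma \ref{cup-closed} to hold automatically: from $\be_0 \be_1 \cdots \be_q = e$ one gets $\be_1 \cdots \be_q \be_0 = \be_0^{-1} \be_0 = e = \al_0 \al_1 \cdots \al_p$, so the required equality $\al_0 \cdots \al_p = \be_1 \cdots \be_q \be_0$ is satisfied. Lemma \ref{cup-closed} then yields
$$\al \underset{S}{\cdot} \be = (\be_0 (\al_1 \cdots \al_p)^{-1}, \, \al_1, \ldots, \al_p, \, \be_1, \ldots, \be_q)^{*}.$$
Using $\al_0 \al_1 \cdots \al_p = e$ to rewrite $(\al_1 \cdots \al_p)^{-1} = \al_0$, the leading entry collapses to $\be_0 \al_0$. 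Applying $\Psi_{p+q}$ (which inverts the leading entry and fixes the rest) and using $(\be_0 \al_0)^{-1} = \al_0^{-1} \be_0^{-1}$ in $G$, I obtain
$$\Psi_{p+q}(\al \underset{S}{\cdot} \be) = (\al_0^{-1} \be_0^{-1}, \, \al_1, \ldots, \al_p, \, \be_1, \ldots, \be_q)^{\#}.$$

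For the Gerstenhaber side, I would set $f = \Psi_p(\al) = (\al_0^{-1}, \al_1, \ldots, \al_p)^{\#}$ and $g = \Psi_q(\be) = (\be_0^{-1}, \be_1, \ldots, \be_q)^{\#}$ and evaluate $f \underset{G}{\cdot} g$ on a basis tuple $(h_1, \ldots, h_{p+q}) \in G^{p+q}$ directly from the definition of the Gerstenhaber product. The factor $f(h_1, \ldots, h_p)$ equals $\al_0^{-1}$ exactly when $(h_1, \ldots, h_p) = (\al_1, \ldots, \al_p)$ and is $0$ otherwise, while $g(h_{p+1}, \ldots, h_{p+q})$ equals $\be_0^{-1}$ exactly when $(h_{p+1}, \ldots, h_{p+q}) = (\be_1, \ldots, \be_q)$ and is $0$ otherwise. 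Multiplying these in $k[G]$, the result is $\al_0^{-1} \be_0^{-1}$ on the single matching tuple and $0$ elsewhere, which is precisely the basis element $(\al_0^{-1} \be_0^{-1}, \al_1, \ldots, \al_p, \be_1, \ldots, \be_q)^{\#}$ displayed above. Matching the two computations completes the proof.

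The only delicate point, and what I expect to be the main bookkeeping obstacle, is the noncommutative group arithmetic: one must invoke the support relations in exactly the right place to collapse $\be_0(\al_1 \cdots \al_p)^{-1}$ to $\be_0 \al_0$, and then track that $\Psi_{p+q}$ sends this single leading entry to $\al_0^{-1} \be_0^{-1}$ rather than $\be_0^{-1} \al_0^{-1}$. There is no homological or conceptual difficulty beyond this; the identity reduces to matching two basis elements, and it holds precisely because the $BG$ support condition removes the twisting factor $\be_0(\al_1 \cdots \al_p)^{-1}$ that in general separates the simplicial product from the Gerstenhaber product.
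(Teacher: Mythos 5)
Your proposal is correct and follows essentially the same route as the paper's proof: reduce to basis elements, use the support relations to see that the nonvanishing condition of Lemma \ref{cup-closed} holds and the leading entry collapses to $\be_0\al_0$, apply $\Psi_{p+q}$ to get $(\al_0^{-1}\be_0^{-1}, \al_1, \ldots, \al_p, \be_1, \ldots, \be_q)^{\#}$, and identify this with $\Psi_p(\al) \underset{G}{\cdot} \Psi_q(\be)$. The only cosmetic difference is that you obtain $\be_0\al_0$ via $(\al_1\cdots\al_p)^{-1} = \al_0$ while the paper uses $(\be_1\cdots\be_q)^{-1} = \be_0$, and you verify the final Gerstenhaber identification by explicit evaluation on basis tuples where the paper states it directly.
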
 
\begin{proof}
Let $g_i \in G$ for $i = 0$, 1, 2, $\ldots \,$, $p+q$, and let $\sigma = (g_0, \,
g_1, \, \ldots \, , \, g_{p+q})$.  Then 
$$  (\al \underset{S}{\cdot} \be ) \in 
{\rm{Hom}}_k (k[G]^{\ot (p+q+1)}, \, k)  $$ 
is determined by $(\al \underset{S}{\cdot} \be)(\sigma)$.  Necessary
conditions for $(\al \underset{S}{\cdot} \be)(\sigma) \neq 0$ are
stated in Lemma \eqref{cup-closed}.  Under the assumption that 
$\al_0 \al_1 \, \ldots \, \al_p = e$ and 
$\be_0 \be_1 \, \ldots\, \be_q = e$, we have 
$\al_0 \al_1 \, \ldots \, \al_p = \be_1 \be_2 \, \ldots \,\be_q \be_0$.  
For $(\al \underset{S}{\cdot} \be)(\sigma) \neq 0$, we need
$$  g_0 = (\be_1 \be_2 \ldots \be_q)^{-1} \al_0 = \be_0 \al_0 . $$
Thus,
\begin{align*}
&  \Psi_{p+q} (\al \underset{S}{\cdot} \be) \\
&  = \Psi_{p+q} \big( (\be_0 \al_0, \ \al_1, \ \al_2, \ \ldots \, ,
\al_p, \ \be_1, \ \be_2, \ \ldots \, , \ \be_q)^* \big) \\
& = ( (\be_0 \al_0)^{-1}, \ \al_1, \ \al_2, \ \ldots \, ,
\al_p, \ \be_1, \ \be_2, \ \ldots \, , \ \be_q)^{\#} \\
& = (\al_0^{-1} \be_0^{-1}, \ \al_1, \ \al_2, \ \ldots \, ,
\al_p, \ \be_1, \ \be_2, \ \ldots \, , \ \be_q)^{\#} \\
& = (\al_0^{-1}, \ \al_1, \ \ldots \, , \ \al_p)^{\#}
\underset{G}{\cdot}  (\be_0^{-1}, \ \be_1, \ \ldots \, , \ \be_q)^{\#} \\
& = \Psi_p(\al) \underset{G}{\cdot} \Psi_q(\be).  
\end{align*}
It follows that $f \underset{S}{\cdot} g = f \underset{G}{\cdot}g$
for $f = \Psi_p(\al)$, $g = \Psi_q(\be)$.  
\end{proof}

Recall that for $\al \in {\rm{Hom}}_k(k[G]^{\ot (p+1)}, \, k)$,  
$\be \in  {\rm{Hom}}_k(k[G]^{\ot (q+1)}, \, k)$, and $\sigma \in
G^{p+q}$, the $j$th term in Steenrod's cup-one product is given by 
\begin{align*}
& (\al \underset{1, \, S}{\cdot}  \be)_j (\sigma) = 
\al (d_{j+1}  d_{j+2}  \, \ldots \,  d_{j+q-1}) (\sigma)  \\
& \hspace{.5in} \cdot \be( d_0  d_1  \, \ldots \,  d_{j-1} 
d_{j+q+1}  d_{j+q+2}  \, \ldots \,  d_{p+q-1}) (\sigma)
\end{align*}  
\begin{theorem} \label{j-th-term}
Let 
\begin{align*}
&  \al = (\al_0, \, \al_1, \, \ldots \, , \, \al_p)^* \in
{\rm{Hom}}_k (k[G]^{\ot (p+1)}, \, k) \ \ \ {\rm{and}} \\
&  \be = (\be_0, \, \be_1, \, \ldots \, , \, \be_q)^* \in
{\rm{Hom}}_k (k[G]^{\ot (q+1)}, \, k)
\end{align*}
be supported on $BG$.  Then as cochains
$$  \Psi_{p+q-1}( (\al \underset{1, \, S}{\cdot}  \be)_j  ) = 
\Psi_p(\al) \underset{(j)}{\circ} \Psi_q (\be).  $$
For $f = \Psi_p(\al)$ and $g = \Psi_q(\be)$, we have
$$  (f \underset{1, \, S}{\cdot} g)_j = 
f \underset{(j)}{\circ} g.  $$
\end{theorem}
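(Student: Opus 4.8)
The plan is to verify the identity directly at the cochain level by evaluating both sides on an arbitrary basis tuple and matching them; since both $(\al \underset{1, \, S}{\cdot} \be)_j$ and $\Psi_p(\al) \underset{(j)}{\circ} \Psi_q(\be)$ are single, unsigned summands, no sign bookkeeping or passage to cohomology is needed. The second formulation follows from the first via $f = \Psi_p(\al) = (\al_0^{-1}, \al_1, \ldots, \al_p)^{\#}$ and $g = \Psi_q(\be) = (\be_0^{-1}, \be_1, \ldots, \be_q)^{\#}$, so it suffices to establish the displayed equality for $\al$ and $\be$.

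First I would record the left-hand side. By the computation in the proof of Lemma \ref{cup-one-closed}, the term $(\al \underset{1, \, S}{\cdot} \be)_j$ is nonzero only when the over-determined system forces
$$ \be_1 \be_2 \, \ldots \, \be_q = \al_{j+1} \quad \text{and} \quad \al_{j+2} \, \ldots \, \al_p \al_0 \al_1 \, \ldots \, \al_j = \be_0, $$
in which case $(\al \underset{1, \, S}{\cdot} \be)_j = (\al_0, \, \al_1, \, \ldots \, , \, \al_j, \, \be_1, \, \ldots \, , \, \be_q, \, \al_{j+2}, \, \ldots \, , \, \al_p)^*$. Applying $\Psi_{p+q-1}$, which inverts only the leading entry, gives
$$ \Psi_{p+q-1}\big( (\al \underset{1, \, S}{\cdot} \be)_j \big) = (\al_0^{-1}, \, \al_1, \, \ldots \, , \, \al_j, \, \be_1, \, \ldots \, , \, \be_q, \, \al_{j+2}, \, \ldots \, , \, \al_p)^{\#}. $$

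Next I would compute the right-hand side from the definitions of the $\#$-maps and the insertion $\underset{(j)}{\circ}$. Inserting $g = (\be_0^{-1}, \be_1, \ldots, \be_q)^{\#}$ into the $(j{+}1)$st slot of $f = (\al_0^{-1}, \al_1, \ldots, \al_p)^{\#}$, the inner value $g(a_{j+1}, \ldots, a_{j+q})$ equals $\be_0^{-1}$ exactly when $a_{j+1} = \be_1, \ldots, a_{j+q} = \be_q$, and then $f$ returns $\al_0^{-1}$ exactly when $a_1 = \al_1, \ldots, a_j = \al_j$, the inserted value satisfies $\be_0^{-1} = \al_{j+1}$, and $a_{j+q+1} = \al_{j+2}, \ldots, a_{p+q-1} = \al_p$. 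Hence $\Psi_p(\al) \underset{(j)}{\circ} \Psi_q(\be)$ equals $(\al_0^{-1}, \al_1, \ldots, \al_j, \be_1, \ldots, \be_q, \al_{j+2}, \ldots, \al_p)^{\#}$ when $\be_0^{-1} = \al_{j+1}$, and vanishes otherwise.

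The heart of the proof, and the step I expect to be the main obstacle, is reconciling the two nonvanishing conditions using the support hypotheses $\al_0 \al_1 \, \ldots \, \al_p = e$ and $\be_0 \be_1 \, \ldots \, \be_q = e$. From $\be_0 \be_1 \, \ldots \, \be_q = e$ the condition $\be_1 \, \ldots \, \be_q = \al_{j+1}$ is precisely $\be_0^{-1} = \al_{j+1}$, matching the right-hand side. It then remains to see that the second left-hand condition is redundant: since $\al_0 \, \ldots \, \al_p = e$, the cyclic invariance of a product equal to $e$ (if $xy = e$ then $yx = e$, taken with $x = \al_0 \, \ldots \, \al_j$ and $y = \al_{j+1} \, \ldots \, \al_p$) yields $\al_{j+1} \al_{j+2} \, \ldots \, \al_p \al_0 \, \ldots \, \al_j = e$, whence $\al_{j+2} \, \ldots \, \al_p \al_0 \, \ldots \, \al_j = \al_{j+1}^{-1} = \be_0$ once $\al_{j+1} = \be_0^{-1}$ is known. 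Thus the two sides have identical support and identical leading entry $\al_0^{-1}$, proving the equality of cochains; the statement for $f$ and $g$ follows by substituting $f = \Psi_p(\al)$ and $g = \Psi_q(\be)$.
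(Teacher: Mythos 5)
Your proposal is correct and follows essentially the same route as the paper: both invoke the nonvanishing conditions from Lemma \ref{cup-one-closed}, use the support hypotheses to show those conditions reduce to the single condition $\al_{j+1} = \be_0^{-1}$, and then match the explicit $\#$-map forms of $\Psi_{p+q-1}\big( (\al \underset{1, \, S}{\cdot} \be)_j \big)$ and $\Psi_p(\al) \underset{(j)}{\circ} \Psi_q(\be)$. The only difference is cosmetic: you spell out the cyclic-shift argument ($xy = e \Rightarrow yx = e$) that the paper leaves implicit when asserting the equivalence of the two conditions.
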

\begin{proof}
Let $\sigma = (g_0, \, g_1, \, \ldots \, , g_p, \, g_{p+1}, \, \ldots
\, , g_{p+q-1}) \in
G^{p+q}$.  Necessary conditions for $( \al \underset{1, \, S}{\cdot} \be)_j
(\sigma)$ to be non-zero are stated in Lemma \eqref{cup-one-closed}.   
Under the assumption that $\al_0 \al_1 \, \ldots \, \al_p = e$ and 
$\be_0 \be_1 \, \ldots\, \be_q = e$, we have 
$$  (\be_1 \be_2 \, \ldots \, \be_q = \al_{j+1}) \Longleftrightarrow  
( \al_{j+2} \al_{j+3} \, \ldots \, \al_p \al_0 \al_1 \, \ldots \,
\al_j = \be_0 ).  $$
In order that $( \al \underset{1, \, S}{\cdot} \be)_j (\sigma) \neq
0$, we need $\be_1 \be_2 \, \ldots \, \be_q = \al_{j+1}$, in which
case $\al_{j+1} = \be_0^{-1}$ and 
$$  (\al \underset{1, \, s}{\cdot} \be)_j = (\al_0, \ \al_1, \ \ldots
\, , \ \al_j, \ \be_1, \ \be_2, \ \ldots \, , \ \be_q, \ \al_{j+2}, \
\al_{j+3}, \ \ldots \, , \ \al_p)^*.  $$
If $\al_{j+1} \neq \be_0^{-1}$, then $(\al \underset{1, \, S}{\cdot} \be)_j
= 0$.  

Now, 
\begin{align*}
& f = \Psi_p(\al ) = (\al_0^{-1}, \ \al_1, \ \ldots \, , \ \al_p)^{\#} \\
& g = \Psi_p(\be ) = (\be_0^{-1}, \ \be_1, \ \ldots \, , \ \be_q)^{\#}
\end{align*}
For $h_i \in G$, $i = 1$, 2, 3, $\ldots \,$, $p+q-1$, we have
\begin{align*} 
& (f \underset{(j)}{\circ} g)(h_1, \, h_2, \, \ldots \, , \,
h_{p+q-1}) \\
& = f(h_1, \, h_2, \, \ldots \, , \, h_j, \ g(h_{j+1}, \, \ldots \, ,
\, h_{j+q}), \ h_{j+q+1}, \, \ldots \, , \, h_{p+q-1}).
\end{align*}
It follows that 
$$  f \underset{(j)}{\circ} g = ( \al_0^{-1},\ \al_1, \ \ldots
\, , \ \al_j, \ \be_1, \ \be_2, \ \ldots \, , \ \be_q, \ \al_{j+2}, \
\al_{j+3}, \ \ldots \, , \ \al_p)^{\#}  $$        
under the condition that $\al_{j+1} = \be_0^{-1}$.  
Thus, 
\begin{align*}
&  \Psi_{p+q-1}( (\al \underset{1, \, S}{\cdot}  \be)_j  ) = 
\Psi_p(\al) \underset{(j)}{\circ} \Psi_q (\be) \ \ \ {\rm{and}}  \\
&  (f \underset{1, \, S}{\cdot} g)_j  =  f \underset{(j)}{\circ} g.  
\end{align*}
\end{proof}
\begin{corollary}  
Let 
\begin{align*}
&  \al = (\al_0, \, \al_1, \, \ldots \, , \, \al_p)^* \in
{\rm{Hom}}_k (k[G]^{\ot (p+1)}, \, k) \ \ \ {\rm{and}} \\
&  \be = (\be_0, \, \be_1, \, \ldots \, , \, \be_q)^* \in
{\rm{Hom}}_k (k[G]^{\ot (q+1)}, \, k)
\end{align*}
be supported on $BG$.  Then as cochains
$$  \Psi_{p+q-1}( \al \underset{1, \, S}{\cdot}  \be ) = 
\Psi_p(\al) {\circ} \Psi_q (\be),  $$
i.e., over $BG$ Steenrod's cup-one product
is Gerstenhaber's pre-Lie product, after application
of the cochain map $\Psi_*$.  For $f = \Psi_p (\al)$ and $g =
\Psi_q(\be)$, we have $f \underset{1, \, S}{\cdot} g = f \circ g$.    
\end{corollary}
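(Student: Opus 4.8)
The plan is to reduce the statement to the term-by-term identity already established in Theorem \ref{j-th-term}, using only the $k$-linearity of $\Psi_*$ together with the fact that the two products carry the same sign convention. First I would recall that, by definition, Steenrod's cup-one product is the signed sum of the $p$-many $j$th terms introduced before Theorem \ref{j-th-term},
$$ \al \underset{1, \, S}{\cdot} \be = \sum_{j=0}^{p-1} (-1)^{(p-1-j)(q-1)} (\al \underset{1, \, S}{\cdot} \be)_j, $$
while Gerstenhaber's pre-Lie product is assembled from the $\underset{(j)}{\circ}$ operations with the identical signs,
$$ f \circ g = \sum_{j=0}^{p-1} (-1)^{(p-1-j)(q-1)} f \underset{(j)}{\circ} g. $$
The verbatim agreement of these two sign factors $(-1)^{(p-1-j)(q-1)}$ is the essential bookkeeping point; it is precisely what allows the two products to be identified summand by summand with no correction term.

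Next I would check that $\Psi_{p+q-1}$ may legitimately be applied to each individual summand. From the computation in the proof of Lemma \eqref{cup-one-closed}, each term $(\al \underset{1, \, S}{\cdot} \be)_j$ is, when nonzero, a single basis cochain of the form $(\ld)^* \in I_{p+q-1}$, hence lies in the domain of $\Psi_{p+q-1}$; and $\Psi_{p+q-1}$ is $k$-linear on $I_{p+q-1}$ by its construction. Applying $\Psi_{p+q-1}$ to the signed sum above and distributing over it therefore gives
$$ \Psi_{p+q-1}(\al \underset{1, \, S}{\cdot} \be) = \sum_{j=0}^{p-1} (-1)^{(p-1-j)(q-1)} \, \Psi_{p+q-1}\big((\al \underset{1, \, S}{\cdot} \be)_j\big). $$

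Finally I would invoke Theorem \ref{j-th-term}, which under the hypothesis that $\al$ and $\be$ are supported on $BG$ supplies $\Psi_{p+q-1}((\al \underset{1, \, S}{\cdot} \be)_j) = \Psi_p(\al) \underset{(j)}{\circ} \Psi_q(\be)$ for every $j$. Substituting this into the previous display and comparing with the definition of the pre-Lie product yields
$$ \Psi_{p+q-1}(\al \underset{1, \, S}{\cdot} \be) = \sum_{j=0}^{p-1} (-1)^{(p-1-j)(q-1)} \, \Psi_p(\al) \underset{(j)}{\circ} \Psi_q(\be) = \Psi_p(\al) \circ \Psi_q(\be), $$
which is the asserted identity; setting $f = \Psi_p(\al)$ and $g = \Psi_q(\be)$ records it as $f \underset{1, \, S}{\cdot} g = f \circ g$. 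I do not anticipate any genuine obstacle, since Theorem \ref{j-th-term} has already done the substantive work of matching the terms. The only care required is the explicit observation that the two families of signs coincide, so that summing the per-$j$ identities introduces no spurious factors; and that each $j$th term individually lands in $I_{p+q-1}$, so that the linear extension of $\Psi_{p+q-1}$ over the sum is justified.
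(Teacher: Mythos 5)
Your proof is correct and follows essentially the same route as the paper, whose own proof simply cites Theorem \ref{j-th-term} together with the definitions of the cup-one and pre-Lie products. You have merely made explicit the two points the paper leaves implicit --- that the sign conventions $(-1)^{(p-1-j)(q-1)}$ in the two sums coincide, and that $\Psi_{p+q-1}$ distributes $k$-linearly over the signed sum of $j$th terms --- which is a faithful expansion, not a different argument.
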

\begin{proof}
The proof follows from Theorem \eqref{j-th-term}, the definition of
Steenrod's cup-one, and the definition of the pre-Lie product.
\end{proof}
\begin{corollary}
Let $\al = (\al_0, \, \al_1, \, \ldots \, , \, \al_p)^* \in
{\rm{Hom}}_{\bz /2} (\bz /2 [G]^{\ot (p+1)}, \, \bz /2)$ be a cocycle
supported on $BG$.  Set $f = \Psi_p (\al)$. 
Then on $HH^p ( \bz /2 [G]; \, \bz /2 [G])$, we have
$$  Sq^{p-1}(f) = f \underset{1, \, S}{\cdot} f = f \circ f.  $$
\end{corollary}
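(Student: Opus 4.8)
The plan is to deduce both equalities directly from results already in hand, so that the argument is nothing more than the specialization of two earlier statements to the diagonal case $g = f$.

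For the first equality I will invoke the Corollary defining the mod 2 Steenrod operations, which sets $Sq^i(f) := Sq_{p-i}(f) = f \underset{p-i,\,S}{\cdot} f$ on $HH^*(\bz/2[G];\,\bz/2[G])$. Specializing to $i = p-1$ gives $p - i = 1$, whence $Sq^{p-1}(f) = Sq_1(f) = f \underset{1,\,S}{\cdot} f$. This step is pure bookkeeping: the only thing to confirm is the index convention, namely that the superscript $p-1$ corresponds, via $Sq^i = Sq_{p-i}$, to the cup-\emph{one} product. I will also record that, since $f$ is a mod 2 cocycle, the same Corollary guarantees that $f \underset{1,\,S}{\cdot} f$ is again a cocycle, so that it represents a well-defined class in cohomology.

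For the second equality I will apply the preceding Corollary, which identifies Steenrod's cup-one product with Gerstenhaber's pre-Lie product on cochains supported on $BG$, in the special case $\be = \al$, i.e.\ $g = f$. That Corollary asserts $f \underset{1,\,S}{\cdot} g = f \circ g$ whenever $f = \Psi_p(\al)$ and $g = \Psi_q(\be)$ with $\al$ and $\be$ supported on $BG$. Since $\al$ is supported on $BG$ by hypothesis, the pair $(\al,\al)$ satisfies the hypotheses of that Corollary, and substituting yields $f \underset{1,\,S}{\cdot} f = f \circ f$.

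I expect no genuine obstacle, since this Corollary is a direct specialization of Theorem \eqref{j-th-term} (summed over $j$) together with the definition of $Sq^i$. The only points requiring care are the degree bookkeeping, confirming that all three expressions $Sq^{p-1}(f)$, $f \underset{1,\,S}{\cdot} f$, and $f \circ f$ land in $HH^{2p-1}(\bz/2[G];\,\bz/2[G])$, and the legitimacy of setting $\be = \al$ in the earlier cochain-level identity. The latter holds because that identity was established termwise for arbitrary supported $\al$ and $\be$, and hence survives the diagonal substitution $\be = \al$ without modification.
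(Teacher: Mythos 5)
Your proposal is correct and is exactly the argument the paper intends: the paper states this corollary without proof, as an immediate consequence of the definition $Sq^i(f) := Sq_{p-i}(f) = f \underset{p-i,\,S}{\cdot} f$ (with $i = p-1$ giving the cup-one product) combined with the preceding corollary identifying $f \underset{1,\,S}{\cdot} g$ with $f \circ g$ for cochains supported on $BG$, specialized to $g = f$. Your bookkeeping on the index convention and the legitimacy of the diagonal substitution $\be = \al$ is accurate, so nothing further is needed.
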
 
\begin{corollary}  \label{zero-bracket}
Let 
\begin{align*} 
&  \al = (\al_0, \, \al_1, \, \ldots \, , \, \al_p)^* \in
{\rm{Hom}}_k (k[G]^{\ot (p+1)}, \, k) \ \ \ {\rm{and}} \\
&  \be = (\be_0, \, \be_1, \, \ldots \, , \, \be_q)^* \in
{\rm{Hom}}_k (k[G]^{\ot (q+1)}, \, k)
\end{align*}
be cocycles supported on $BG$.  Let $f = \Psi_p (\al)$ and $g =
\Psi_q(\be)$.  Then the Lie bracket
$$  [f, \, g] = f \circ g - (-1)^{(p+1)(q+1)} g \circ f  $$
is zero in $HH^{p+q-1}(k[G]; \, k[G])$.  
\end{corollary}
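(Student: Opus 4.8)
The plan is to exhibit the bracket $[f, \, g]$ as the coboundary of a cup-two product and thereby show it vanishes in cohomology. The starting point is the coboundary formula for the cup-$i$ products established above, specialized to $i = 2$:
$$  \delta(f \underset{2, \, S}{\cdot} g) = (\delta f) \underset{2, \, S}{\cdot} g
+ (-1)^{p-1} f \underset{2, \, S}{\cdot} (\delta g)
+ (-1)^p [ (-1)^{(p+q+1)} f \underset{1, \, S}{\cdot} g - (-1)^{pq} g \underset{1, \, S}{\cdot} f ].  $$
Since $\al$ and $\be$ are cocycles and $\Psi_*$ is a cochain map, $f = \Psi_p(\al)$ and $g = \Psi_q(\be)$ are cocycles as well, so $\delta f = \delta g = 0$ and the first two summands drop out. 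This leaves
$$  \delta(f \underset{2, \, S}{\cdot} g) = (-1)^p [ (-1)^{(p+q+1)} f \underset{1, \, S}{\cdot} g - (-1)^{pq} g \underset{1, \, S}{\cdot} f ].  $$

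Next I would invoke the preceding Corollary identifying Steenrod's cup-one product with Gerstenhaber's pre-Lie product for cochains supported on $BG$. Because both $\al$ and $\be$ satisfy the support hypothesis $\al_0 \al_1 \ldots \al_p = e$ and $\be_0 \be_1 \ldots \be_q = e$, that corollary applies in either order of the two factors, giving the cochain-level equalities $f \underset{1, \, S}{\cdot} g = f \circ g$ and $g \underset{1, \, S}{\cdot} f = g \circ f$. Substituting these and simplifying the exponents via $2p + q + 1 \equiv q + 1$ and $p + pq \equiv p(q+1) \pmod 2$ produces
$$  \delta(f \underset{2, \, S}{\cdot} g) = (-1)^{q+1} f \circ g - (-1)^{p(q+1)} g \circ f.  $$
Factoring out $(-1)^{q+1}$ and using $(p-1)(q+1) \equiv (p+1)(q+1) \pmod 2$, I would rewrite the right-hand side as
$$  (-1)^{q+1} \big[ f \circ g - (-1)^{(p+1)(q+1)} g \circ f \big] = (-1)^{q+1} [f, \, g].  $$
Hence $[f, \, g] = (-1)^{q+1} \, \delta(f \underset{2, \, S}{\cdot} g)$ is a Hochschild coboundary, and therefore $[f, \, g] = 0$ in $HH^{p+q-1}(k[G]; \, k[G])$.

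The argument is almost entirely a consequence of the machinery already in place, so the work is essentially bookkeeping rather than a genuine obstacle. The one point requiring care is the sign reconciliation in the two displayed simplifications, where the chosen cup-$i$ sign convention must be matched precisely against the Gerstenhaber bracket convention $[f, \, g] = f \circ g - (-1)^{(p+1)(q+1)} g \circ f$; the key parity observation is that the $(p-1)$ appearing after factoring agrees with the $(p+1)$ in the bracket modulo $2$. A secondary point to confirm is that the cup-one-equals-pre-Lie identification is legitimately applied with the factors in both orders, which holds because the support condition is symmetric in $\al$ and $\be$.
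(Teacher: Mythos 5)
Your proof is correct and is essentially the paper's own argument: both exhibit $[f, \, g]$ as $(-1)^{q+1}$ times the coboundary of the cup-two product, using the $i=2$ coboundary relation with the cocycle hypothesis killing the first two terms, together with the cup-one/pre-Lie identification over $BG$ applied in both orders. The only cosmetic difference is that the paper carries out the computation in the $b^*$-complex (obtaining $b^*\big( (-1)^{q+1} \al \underset{2, \, S}{\cdot} \be \big) = \al \underset{1, \, S}{\cdot} \be - (-1)^{(p+1)(q+1)} \be \underset{1, \, S}{\cdot} \al$) and then applies the cochain map $\Psi_{p+q-1}$, whereas you work directly with the transported coboundary formula on Hochschild cochains; under the isomorphism $\Psi_*$ these are the same computation, and your sign bookkeeping matches.
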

\begin{proof}
In ${\rm{Hom}}(k[G]^{(*+1)}, \, k)$, we have
$$  b^*( (-1)^{q+1} \al \underset{2, \, S}{\cdot} \be ) =
\al \underset{1, \, S}{\cdot} \be - (-1)^{(p+1)(q+1)} \be \underset{1,
  \, S}{\cdot} \al, $$
since $b^*(\al) = 0$ and $b^*(\be) = 0$.  Thus,
$$  \Psi_{p+q-1} ( b^*( (-1)^{q+1} \al \underset{2, \, S}{\cdot} \be )) =
\Psi_{p+q-1} ( \al \underset{1, \, S}{\cdot} \be   - (-1)^{(p+1)(q+1)} 
\be \underset{1, \, S}{\cdot} \al ),  $$
and  $[f, \, g]$ is a coboundary.  
\end{proof}

From the work of Tradler \cite{Tradler} and others \cite[13.7.6]{LV},
the Hochschild cohomology groups $HH^*( k[G]; \, k[G])$ support the
structure of a Batalin-Vilkovisky algebra with the Gerstenhaber
product and Lie bracket given as in this paper.  Additionally 
there is \cite{Tradler} a square-zero operator $\Delta$ on $HH^*
(k[G]; \, k[G])$ of degree $-1$.  
\begin{corollary}
Let $f \in HH^p( k[G]; \, k[G])$ and $g \in HH^q(k[G]; \, k[G])$ be
supported on $BG$.  Then in $HH^*(k[G]; \, k[G])$,  
$$  \Delta ( f \cdot g) = \Delta (f) \cdot g +
(-1)^p f \cdot \Delta (g), $$
where the product $f \cdot g$ can be taken to be either the simplicial 
cup product or the Gerstenhaber product.  
\end{corollary}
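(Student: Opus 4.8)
The plan is to recognize this statement as an immediate consequence of the defining identity of a Batalin--Vilkovisky algebra together with the vanishing of the Gerstenhaber bracket established in Corollary~\eqref{zero-bracket}. Recall that in any BV algebra the degree $-1$ operator $\Delta$ fails to be a derivation of the product precisely by the bracket; in the normalization of Tradler \cite{Tradler} on $HH^*(k[G];\,k[G])$ this reads
$$ \Delta(f\cdot g) = \Delta(f)\cdot g + (-1)^p f\cdot\Delta(g) + (-1)^p[f,\,g], $$
for $f$ of degree $p$, where $[\,,\,]$ is the Gerstenhaber (Lie) bracket of degree $-1$ appearing throughout this paper. Thus the asserted derivation formula is equivalent to the vanishing of the last term.

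First I would invoke Corollary~\eqref{zero-bracket}: writing $f=\Psi_p(\al)$ and $g=\Psi_q(\be)$ for cocycles $\al$, $\be$ supported on $BG$, that corollary gives $[f,\,g]=0$ in $HH^{p+q-1}(k[G];\,k[G])$. Substituting $[f,\,g]=0$ into the BV identity above removes the obstruction term and leaves exactly
$$ \Delta(f\cdot g) = \Delta(f)\cdot g + (-1)^p f\cdot\Delta(g), $$
which is the claim.

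It remains to justify the clause that $f\cdot g$ may be read as either the simplicial or the Gerstenhaber product. This is supplied by Lemma~\eqref{Psi}: on cochains supported on $BG$ the two products agree as cochains, so $f\cdot g$ denotes the same cohomology class under either interpretation and the formula holds verbatim in both cases. The only point requiring care---rather than a genuine obstacle---is matching conventions: one must check that the degree of $\Delta$ and the sign attached to the bracket term in Tradler's BV relation are consistent with the sign convention $[f,\,g]=f\circ g-(-1)^{(p+1)(q+1)}g\circ f$ used here. Once the conventions are aligned no computation remains, the entire analytic content having already been carried by Corollary~\eqref{zero-bracket}.
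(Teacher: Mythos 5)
Your proposal is correct and follows essentially the same route as the paper: the paper's proof likewise combines Corollary~\eqref{zero-bracket} with Tradler's BV relation, writing $0 = [f,\,g] = -(-1)^{(p-1)q}\bigl( \Delta(f\cdot g) - \Delta(f)\cdot g - (-1)^p f\cdot\Delta(g)\bigr)$, so that the vanishing bracket forces the derivation identity. Your sign normalization of the BV relation differs superficially from the paper's, but as you note this is immaterial once $[f,\,g]=0$, and your appeal to Lemma~\eqref{Psi} for the ``either product'' clause makes explicit what the paper leaves implicit.
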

\begin{proof}
It follows from Corollary \eqref{zero-bracket} and \cite{Tradler} that
$$  0 = [f, \, g] = - (-1)^{(p-1)q} \big( \Delta (f \cdot g) - \Delta (f)
\cdot g - (-1)^p f \cdot \Delta (g) \big)  $$
in $HH^*(k[G]; \, k[G])$.    
\end{proof} 

In future work, we plan to extend these results to simplicial group
rings, in particular to a simplicial group whose geometric realization
is homotopy equivalent to $\Omega M$, the based loop space on a
manifold $M$ that is not necessarily simply connected.

\end{document}